\documentclass{article}[12pt]

\usepackage[margin=1.1in]{geometry}

\usepackage[utf8]{inputenc} %
\usepackage[T1]{fontenc}    %
\usepackage[hidelinks,colorlinks,linkcolor=black,citecolor=orange]{hyperref}  
 \hypersetup{
     colorlinks=true,
     linkcolor=blue,
     filecolor=blue,    
     urlcolor=cyan,
     }
\usepackage{placeins}
\usepackage{url}            %

\usepackage{booktabs}       %
\usepackage{amsfonts}       %
\usepackage{nicefrac}       %
\usepackage[expansion=false]{microtype}      %
\usepackage{xcolor}         %
\usepackage[sortcites=true,citestyle=numeric,natbib=true,bibstyle=numeric,maxbibnames=99]{biblatex}

\usepackage{tikz}
\usetikzlibrary{decorations.pathreplacing}
\usetikzlibrary{positioning}

\addbibresource{ref.bib}
\usepackage{graphicx}
\usepackage{caption}
\usepackage{subcaption}
\usepackage{amsmath,amssymb,amsthm}
\usepackage{bbm}
\usepackage{enumitem}   
\usepackage{capt-of}
\usepackage{wrapfig}
\usepackage{algorithm}
\usepackage{algpseudocode}
\newcommand{\upperR}[1]{\uppercase\expandafter{\romannumeral#1}}
\usepackage{xcolor}                %
\usepackage[most]{tcolorbox}             %

\tcbuselibrary{theorems}

\newtcbtheorem{defn}{Definition}%
{	colframe=blue!30!gray, theorem name, fonttitle=\bfseries,
	colback=blue!5, top=1mm,bottom=1mm, boxrule=0.5pt}{Example}

\newtcbtheorem[use counter=theorem, number within=section]{THM}{Theorem}%
{	colframe=blue!30!gray, fonttitle=\bfseries,
	colback=blue!5,  fontupper=\itshape, top=1mm,bottom=1mm, boxrule=0.5pt}{Theorem}
\usepackage{varwidth}
\newtcbox{\mybox}{colback=blue!5,
	colframe=blue!30!black, center, enhanced, varwidth upper}
\newtcbox{\mymath}[1][]{%
	nobeforeafter, math upper, tcbox raise base,
	enhanced, colframe=blue!30!black,
	colback=blue!5, boxrule=0.5pt, top=1mm,bottom=1mm,
	#1}

\usepackage{amsmath,amsfonts,bm}

\def\1{\bm{1}}

\DeclareMathAlphabet{\mathsfit}{\encodingdefault}{\sfdefault}{m}{sl}
\SetMathAlphabet{\mathsfit}{bold}{\encodingdefault}{\sfdefault}{bx}{n}

\newcommand{\E}{\mathbb{E}}

\newcommand{\R}{\mathbb{R}}
\newcommand{\emp}{{\widetilde{m}}}

\renewcommand\L{{\mathcal{L}}}
\renewcommand\S{{\mathcal{S}}}

\renewcommand\R{{\mathbb{R}}}

\newcommand\Diag{{\mathsf{Diag}}}

\usepackage{physics}

\newcommand\I{{\mathcal{I}}}

\newcommand\T{{\mathcal{T}}}

\renewcommand{\norm}[1]{\left\|#1\right\|}
\newcommand{\snorm}[1]{\left\|#1\right\|_{\mathrm{op}}}

\newcommand{\round}[1]{\left(#1\right)}
\newcommand{\brac}[1]{\left[#1\right]}
\renewcommand{\abs}[1]{\left|#1\right|}

\newtheorem{lemma}{Lemma}
\newtheorem{theorem}{Theorem}
\newtheorem{corollary}{Corollary}
\newtheorem{claim}{Claim}
\newtheorem{proposition}{Proposition}

\newtheorem{remark}{Remark}

\newcommand{\tran}{^\top}

\newcommand{\half}{^{-\frac{1}{2}}}

\usepackage[normalem]{ulem}

\makeatletter
\@removefromreset{theorem}{section}
\makeatother

\title{Spectral norm bound for the product of \\random Fourier-Walsh matrices
}
\author{
Libin Zhu\thanks{Department of Mathematics, University of Washington, Seattle, WA 98195; \texttt{https://libinzhu.github.io/}. Research of Zhu was supported by NSF CCF-2023166.}
\and
Damek Davis\thanks{Wharton Department of Statistics and Data Science, University of Pennsylvania,
		Philadelphia, PA 19104, USA;
		\texttt{www.damekdavis.com}. Research of Davis supported by an Alfred P. Sloan research fellowship and NSF DMS award 2047637}
\and
Dmitriy Drusvyatskiy\thanks{Department of Mathematics, U. Washington, Seattle, WA 98195; \texttt{www.math.washington.edu/$\sim$ddrusv}.
		Research of Drusvyatskiy was supported by NSF DMS-2306322, NSF DMS-2023166, and AFOSR FA9550-24-1-0092 awards.}
\and 
Maryam Fazel\thanks{Department of Electrical \& Computer Engineering, University of Washington, Seattle, WA 98195; \texttt{people.ece.uw.edu/fazel\_maryam/}. Research supported by NSF TRIPODS II DMS-2023166, CCF 2007036, CCF 2212261, and CCF 2312775.}
}

\date{April 2024}

\begin{document}
\maketitle
\begin{abstract}
We consider matrix products of the form $A_1(A_2A_2)^\top\ldots(A_{m}A_{m}^\top)A_{m+1}$,  where $A_i$ are normalized random Fourier-Walsh matrices. We identify an interesting polynomial scaling regime when the operator norm of the expected matrix product tends to zero as the dimension tends to infinity.
\end{abstract}

\section{Introduction}
Products of Boolean matrices arise often in combinatorics, probability theory, and analysis of boolean functions. In this work, we study a special class of matrix products generated by random boolean matrices. Setting the stage, consider a set of data points $x^{(1)},\ldots,x^{(n)}$ sampled independently and uniformly from the hypercube $\{-1,1\}^d$. For any family $\mathcal{S}$ of subsets of $\{1,\ldots,d\}$, the {\em Fourier-Walsh matrix} $X_{\mathcal{S}}\in \R^{n\times |\S|}$ has as its $(i,S)$ entry the evaluation of the Fourier-Walsh polynomial $\prod_{s \in S} x^{(i)}_s$, for any $i\in [n]$ and $S\subset\S$. Fourier-Walsh polynomials figure prominently in boolean function analysis because they form an orthonormal basis for the $L_2$-space of functions on $\{-1,1\}^d$ with respect to the uniform measure. On the level of matrices, this implies that for any disjoint set families $\S$ and $\S'$, the orthogonality relation holds:
$$\E[X_{\S}^\top X_{\S'}]=0.$$
Going a step further, it is natural to bound the higher-order moments of the normalized matrix $X_{\S}^\top X_{\S'}/n$. For example, it is straightforward to check that the operator norm of the second-moment satisfies:
\begin{equation}\label{eqn:intro_eqn_moment}
\left\|\E\left[\left(\frac{X_{\S}^\top X_{\S'}}{n}\right)\left(\frac{X_{\S}^\top X_{\S'}}{n}\right)^\top\right]\right\|_{\rm op}= \frac{|\S'|}{n}.
\end{equation}
See the short argument in Appendix~\ref{proof:intro_eqn_moment}.
Interestingly, in the regime $|\S'|=o(n)$, the right-hand side of \eqref{eqn:intro_eqn_moment} tends to zero. Consequently, in this regime the operator norm of the second moment tends to zero. 

In this work, we generalize the estimate of the form \eqref{eqn:intro_eqn_moment} to higher order moments and to more general products of random Fourier-Walsh matrices. The following is our main result.

\begin{theorem}[Main result]\label{lemma:feature_product_tech}
Fix collections of sets $\S_1,\ldots,\S_{m+1} \subseteq 2^{[d]}$ and weights 
$w^{(i)}\in \R^{\S_i}_{+}$. Define the matrix product
$$M= A_1^\top \Bigl(A_2 A_2^\top \Bigr) \cdots \Bigl(A_{m} A_{m}^\top \Bigr) A_{m+1},$$
where $A_i=X_{\mathcal{S}_i} \Diag(w^{(i)})$ are the scaled Fourier-Walsh matrices.
Assume  that the following regularity conditions hold for all indices $i,j\in [m+1]$:
\begin{enumerate}
    \item\label{it:deg_bound} {\bf (degree bound)} The inequalities $|S_i|\leq p_i$ hold for all $S_i\in\S_i$,
    \item\label{it:triv_intersec} {\bf (trivial intersection)} Whenever $\S_i$ intersects $\S_j$, the equality $\S_i=\S_j$ holds,
    \item\label{it:small_weights} {\bf (small weights)} The weights satisfy 
    $w^{(i)}= O_d(n^{-1/2}\wedge d^{-p_i/2})$ for all $i\in [m+1]$.
\end{enumerate}
    Then the estimate
    \begin{equation}\label{eqn:labeled_hard_mat}
    \left\|\E M \right\|_{\rm op}
    = O_d \left(d^{p_j}\right)\|w^{(1)}\|_\infty \|w^{(m+1)}\|_\infty,
    \end{equation}
    holds for any index $j \in [m+1]$ such that $\S_{j}$ is distinct from $\S_{1}$ and $\S_{{m+1}}$.
\end{theorem}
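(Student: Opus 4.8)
The plan is to expand $\mathbb{E} M$ entrywise, reduce it to a diagonal matrix, and then bound each diagonal entry by a sum over set partitions whose individual terms are controlled by a parity argument on an auxiliary cycle graph. Write $\chi_S(x)=\prod_{s\in S}x_s$ for the Fourier--Walsh character. Multiplying out the product, the $(S_1,S_{m+1})$ entry of $M$ becomes
\[
M_{S_1,S_{m+1}}=w^{(1)}_{S_1}w^{(m+1)}_{S_{m+1}}\sum_{a_1,\dots,a_m\in[n]}\ \sum_{T_2\in\S_2,\dots,T_m\in\S_m}\Big(\prod_{i=2}^m (w^{(i)}_{T_i})^2\Big)\,\Phi,
\]
where $\Phi$ is a product of characters evaluated at the data points. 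Setting $T_1:=S_1$, $T_{m+1}:=S_{m+1}$ and $R_i:=T_i\triangle T_{i+1}$, the point $x^{(a_i)}$ carries exactly the character $\chi_{R_i}$. I would group the sum by the partition $\pi$ of $\{1,\dots,m\}$ recording which indices $a_i$ coincide; by independence across distinct data points and the orthogonality $\mathbb{E}\chi_R=\mathbf{1}[R=\varnothing]$, the expectation of each group factors as $\prod_{B\in\pi}\mathbf{1}\big[\triangle_{i\in B}R_i=\varnothing\big]$, and the number of admissible index assignments is the falling factorial $n^{\underline{|\pi|}}\le n^{|\pi|}$. Since $\triangle_{i=1}^m R_i=S_1\triangle S_{m+1}$, summing the block constraints forces $S_1=S_{m+1}$, so $\mathbb{E} M$ vanishes off the diagonal (and is identically zero unless $\S_1=\S_{m+1}$). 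Writing $\mathbb{E} M=\Diag(w^{(1)})\,N\,\Diag(w^{(m+1)})$ with $N$ diagonal, it then suffices to prove $\max_S |N_{S,S}|=O_d(d^{p_j})$.

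Next I would set up a cycle model. Fix $S:=S_1=S_{m+1}$ and regard $1,\dots,m$ as the vertices of a cycle, with vertex $1$ pinned to $T_1=S$ and with edges carrying $R_i=T_i\triangle T_{i+1}$ (the edge $R_m$ rejoining $m$ to $1$). Each block $B\in\pi$ becomes one linear constraint $\partial B\cdot T=0$ over $\mathbb{F}_2^{d}$, where $\partial B$ is the indicator of the odd-degree vertices of the subgraph with edge set $B$. Because the cycle space of $C_m$ is one-dimensional (spanned by the all-edges vector), the $k:=|\pi|$ covectors $\partial B$ have rank exactly $k-1$; hence the system leaves $f=m-k$ of the variables $T_2,\dots,T_m$ free and determines the remaining $k-1$. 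I would bound $N_{S,S}=\sum_\pi n^{\underline{|\pi|}}\,\Sigma(\pi)$ partition by partition, where $\Sigma(\pi)=\sum_{\text{valid }T}\prod_{i=2}^m(w^{(i)}_{T_i})^2$: let the free variables range over their full families and bound each determined weight by $\|w^{(i)}\|_\infty^2$. Charging the single surplus power of $n$ to one free variable of family $\S_j$, bounded by $|\S_j|\,\|w^{(j)}\|_\infty^2\le O_d(d^{p_j})\,n^{-1}$ (the $n^{-1/2}$ half of Condition~\ref{it:small_weights} together with $|\S_j|=O_d(d^{p_j})$ from Condition~\ref{it:deg_bound}); each of the other $m-k-1$ free variables by $\|w^{(i)}\|_2^2\le |\S_i|\,\|w^{(i)}\|_\infty^2=O_d(1)$ (the $d^{-p_i/2}$ half of Condition~\ref{it:small_weights}); and each of the $k-1$ determined weights by $O_d(n^{-1})$, the powers telescope as $n^{k}\cdot n^{-(k-1)}\cdot n^{-1}=1$, leaving the per-partition bound $O_d(d^{p_j})$.

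The crux, and the step I expect to be the main obstacle, is justifying that a free variable of family $\S_j$ always exists, or else $\Sigma(\pi)=0$. Here I would use the parity fact that every $\partial B$ has even weight, since it lies in the image of the boundary map of $C_m$. Consequently $T_j$ is pinned to a constant by the system $\{\partial B\cdot T=0\}\cup\{T_1=S\}$ only when $e_1+e_j$ lies in the span of the $\partial B$, that is, only when one of the two arcs of the cycle joining vertex $1$ to vertex $j$ is a union of entire blocks of $\pi$. In that case, summing the corresponding block constraints gives $\partial(\text{arc})\cdot T=T_1\triangle T_j=\varnothing$, forcing $T_j=S$; but $T_j\in\S_j$, $S\in\S_1$, and $\S_j\cap\S_1=\varnothing$ (Condition~\ref{it:triv_intersec} with $\S_j\neq\S_1$), so no admissible $T_j$ exists and $\Sigma(\pi)=0$. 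Otherwise $T_j$ is genuinely free and may be chosen as the surplus variable of family $\S_j$, which is exactly what the counting in the previous paragraph requires.

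Summing the resulting bound over the constantly many partitions of $\{1,\dots,m\}$ (a Bell number, hence $O_d(1)$) yields $\max_S|N_{S,S}|=O_d(d^{p_j})$ and therefore \eqref{eqn:labeled_hard_mat}. I note that all three hypotheses are genuinely used: the degree bound for $|\S_i|=O_d(d^{p_i})$, the trivial-intersection property both to reduce to the diagonal and, decisively, to rule out $T_j=S$, and both competing estimates in the small-weights condition---the $n^{-1/2}$ bound to absorb the combinatorial factor $n^{\underline{|\pi|}}$ and the $d^{-p_i/2}$ bound to neutralize the remaining free families. The delicate points to verify carefully are the rank computation (that the cycle space is one-dimensional, so $f=m-k$ exactly) and the dichotomy distinguishing the vanishing partitions from those admitting a free $\S_j$-variable.
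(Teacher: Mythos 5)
Your proposal is correct, and it takes a genuinely different route from the paper's. The paper never shows that $\E M$ is diagonal: it bounds $b_1^\top \E M\, b_2$ for arbitrary unit test vectors, stratifies by partitions of \emph{both} the sample and the feature indices, runs an iterative ``collapse'' of the alternating path diagram, and controls what remains through the recursion for $V_s$, whose steps invoke the weighted Fourier--Walsh estimates of Proposition~\ref{prop:fourier_basis_product_card_2} (resting in turn on the binary-matrix counts of Lemma~\ref{lem:bin_matri} and operator-norm bounds for bipartite adjacency matrices). You instead observe, via the telescoping identity $\triangle_{i=1}^m R_i = S_1 \triangle S_{m+1}$, that $\E M$ vanishes off the diagonal --- and vanishes identically unless $\S_1=\S_{m+1}$, by Condition~\ref{it:triv_intersec} --- which eliminates the test vectors and all of the $\ell_2$-norm machinery; you then stratify only over sample-index partitions and dispatch the feature sums in one stroke by $\mathbb{F}_2$ linear algebra on the cycle $C_m$: the blocks become boundary constraints of rank exactly $|\pi|-1$ (the unique dependency being the full sum, since the cycle space of $C_m$ is one-dimensional), the free/determined count makes the powers of $n$ telescope against the falling factorial $n^{\underline{|\pi|}}$, and the even-weight parity of boundaries yields the clean dichotomy: either $T_j$ is free, supplying the factor $|\S_j|\,\|w^{(j)}\|_\infty^2 = O_d(d^{p_j}/n)$, or $T_j$ is forced to equal $S\in\S_1$, which Condition~\ref{it:triv_intersec} with $\S_j\neq\S_1$ rules out, so $\Sigma(\pi)=0$. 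I verified the dichotomy (it correctly covers the all-singletons partition, where everything is pinned to $S$) and the bookkeeping (e.g., it reproduces the exact computation of Appendix~\ref{proof:intro_eqn_moment} for $m=2$). Your route buys a shorter, more structural argument with exact off-diagonal vanishing; the paper's heavier $\ell_2$-based propositions are precisely the component that transfers verbatim to Theorem~\ref{lemma:feature_product_tech_new} with effective degrees, though your argument would adapt as well by replacing $|\S_j|=O_d(d^{p_j})$ with the effective cardinality. In a full write-up, two routine points deserve a line: when $T_j$ is not determined, you should note that the free-coordinate set can be chosen to \emph{contain} $j$ (basis extension for the kernel of the constraint matrix, so that letting free variables range over their full families overcounts the solution set), and for $m=2$ the ``cycle'' is the multigraph on two vertices with two parallel edges, where the rank and parity computations still hold.
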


Let us make a few comments about the assumptions imposed in the theorem. The degree bound \eqref{it:deg_bound} asserts that the Fourier-Walsh polynomials indexed by sets in $\S_i$  have degree bounded by $p_i$. The trivial intersection condition \eqref{it:triv_intersec} ensures that the set systems $\S_i$ and $\S_j$ do not intersect wildly, thereby controlling the interdependence between the random matrices in the product. Finally, the small weights condition \eqref{it:small_weights} is natural because it ensures that the matrices $A_i$ are bounded in operator norm in expectation.

The most important observation about the bound in \eqref{eqn:labeled_hard_mat} is that the right-hand-side is controlled by $d^{p_j}$ for {\em any} index $j \in [m+1]$ such that $\S_{j}$ is distinct from $\S_{1}$ and $\S_{{m+1}}$. A key consequence is that the right side of \eqref{eqn:labeled_hard_mat} is clearly bounded by $O_d \left(d^{p_j}/n\right)$. Therefore the operator norm $\|\E M\|_{\rm op}$ tends to zero as long as there exists $\S_j$ that is distinct from $\S_1$ and $\S_{m+1}$ and such that $d^{p_j}=o(n)$.

\subsection{Related literature}
The study of Boolean matrix products and their spectral properties has been of long-standing interest in combinatorics, probability, and theoretical computer science. A central motivation arises from the analysis of Boolean functions, where Fourier-Walsh expansions provide a natural orthonormal basis for the $L_2$ space of functions on the hypercube. This approach has been explored in the works of O'Donnell~\cite{odonnell2014analysis} and de Wolf~\cite{de2008brief}, which establish Fourier analytic techniques for studying Boolean functions, including orthogonality properties and spectral concentration phenomena.

The study of random Boolean matrices has also been a core theme in random matrix theory. Classical results such as those by Vershynin~\cite{vershynin2018high}, Tropp \cite{tropp2015introduction}, and Tao and Vu~\cite{tao2012random} analyze the spectral properties of random matrices with independent rows, columns, and/or entries. In a similar spirit, the line of works \cite{huang2022matrix,baghal2020matrix,kathuria2020concentration,henriksen2020concentration} studies the concentration phenomenon for products of independent random matrices.
In contrast, products of Fourier-Walsh matrices that we investigate here exhibit nontrivial dependencies  among the factors. %
More recently, random Boolean matrices have appeared in applications such as learning theory and theoretical machine learning. Studies by Feldman et al.~\cite{feldman2017statistical} and Servedio and Tan~\cite{servedio2020attribute} have leveraged Fourier methods to understand the learnability of Boolean functions under noise. 
Finally, we note that our arguments are largely motivated by the \emph{moment method} from random matrix theory~\cite{wigner1955characteristic,erdos2013spectral,tao2010random}, which reduces moment calculations to combinatorial arguments involving paths in a graph.

\smallskip

The rest of the paper proceeds as follows. Section~\ref{sec:notat} records the basic notation we will use. Section~\ref{sec:prelim_results} establishes the key technical results that will be required to prove Theorem~\ref{lemma:feature_product_tech}. Finally, the complete proof of Theorem~\ref{lemma:feature_product_tech} appears in Section~\ref{sec:comp_proof}.

\section{Notation}\label{sec:notat}
We use the following notation throughout the paper. The symbols $\mathbb{N}$ and $\mathbb{R}$ will denote the sets of natural numbers and real numbers, respectively. Indices $i$ as subscripts on vectors $x_i$ will always denote the $i$'th coordinate while indices $i$ as superscripts $x^{(i)}$ will denote the $i$'th vector in a list. We will use $[m]$ as a shorthand for the set $\{1,2,\ldots,m\}$. The symbol $\mathbf{1}_m$ denotes the all-ones vector of length $m$, and $e_j$ denotes the $j$-th standard basis vector in $\R^m$. 
The $\ell_p$-norm of any vector $v\in \mathbb{R}^m$ will be denoted by $\|v\|_p = \left(\sum_i |v_i|^p\right)^{1/p}$. The symbols $\leq$ and ${\rm mod}$ will always be applied coordinate-wise to vectors $v\in \R^m$. For any matrix $A$, the submatrix $A_{\leq s, \leq q}$ consists of the first $s$ rows and first $q$ columns of $A$. Similar notation will be used for $A_{> s, \leq q}$. The cardinality of any set $S$ will be denoted by $|S|$. Finally, for a subset $S \subset [d]$, the symbol $x^S$ denotes the Fourier-Walsh polynomial $x^S=\prod_{i \in S} x_i$. Throughout, the hypercube $\mathbb{H}^d:=\{-1,1\}^d$ will be endowed with the uniform measure, and all expectations $\E[\cdot]$ will be taken with respect to this measure.

\section{Preliminary results}\label{sec:prelim_results}
In this section, we record key preliminary results that we will need to prove Theorem~\ref{lemma:feature_product_tech} in Section~\ref{sec:comp_proof}. The reader can safely skip this section during the first pass and refer back to it when reading Section~\ref{sec:comp_proof}.

We begin with the following lemma, which counts the number of binary matrices with even row sums and whose entries sum to a fixed constant.

\begin{lemma}[Binary matrices I]\label{lem:bin_matri}
    Fix two integers $p,q\in \mathbb{N}$. Then the set of binary matrices
    \begin{equation}\label{eqn:binary_mat}
    M_p=\left\{A\in \{0,1\}^{d\times q}: \sum_{i,j}A_{ij}=p~~\textrm{and}~~ A \cdot\mathbf{1}_{q}=0~\rm{mod}~2\right\},
    \end{equation}
    has cardinality bounded by $(q^2d)^{p/2}$.
\end{lemma}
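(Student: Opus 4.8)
The plan is to set up an explicit injective encoding of each matrix in $M_p$ by a small collection of labeled column-pairs, and then to bound the number of such encodings directly.

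First I would dispose of the parity obstruction. The constraint $A\cdot\mathbf{1}_q=0~\mathrm{mod}~2$ forces every row sum of $A$ to be even, so the total number of ones $\sum_{i,j}A_{ij}$ is even. Hence if $p$ is odd then $M_p$ is empty and the bound holds trivially; from now on I assume $p=2k$ is even.

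The core of the argument is the following encoding $\Phi$. Fix $A\in M_p$. For each nonzero row $i$, list the columns containing a $1$ in increasing order as $j_1<j_2<\cdots<j_{2a_i}$ (there are evenly many, since the row sum is even), and pair them consecutively into the $a_i$ two-element sets $\{j_1,j_2\},\{j_3,j_4\},\ldots$. Labeling each such pair by its row $i$ produces a collection of elements of the ground set $G:=[d]\times\binom{[q]}{2}$. Because the pairs coming from a single row partition that row's support, and pairs from distinct rows carry distinct labels, these labeled pairs are pairwise distinct, and their number is $\sum_i a_i=\tfrac12\sum_{i,j}A_{ij}=k$. Thus $\Phi(A)$ is a $k$-element subset of $G$.

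The main step to verify is that $\Phi$ is injective, i.e.\ that $A$ can be reconstructed from $\Phi(A)$. This should follow because the support of row $i$ is recovered as the union of all column-pairs in $\Phi(A)$ carrying the label $i$: since the within-row pairing partitions each row's support, this union returns exactly the set of ones in row $i$. Granting injectivity, the count is immediate, namely $|M_p|\le\binom{|G|}{k}\le |G|^{k}$, and since $|G|=d\binom{q}{2}\le dq^2$ and $k=p/2$, one obtains $|M_p|\le (dq^2)^{p/2}=(q^2d)^{p/2}$, as claimed. I expect the reconstruction step to be the only place requiring care; the parity reduction and the final binomial estimate are routine.
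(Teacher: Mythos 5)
Your proof is correct, and it is a genuinely different formalization of the same underlying counting idea. The paper argues by recursion: every $A\in M_p$ arises from some matrix in $M_{p-2}$ by choosing a row and turning two of its zeros into ones, which gives $|M_p|\le d\binom{q}{2}\,|M_{p-2}|$, and unrolling yields $\bigl(d\binom{q}{2}\bigr)^{p/2}\le (q^2d)^{p/2}$. You instead make the unrolled object explicit: by fixing the canonical consecutive pairing of each row's (sorted, even-sized) support, you build an injection $\Phi$ from $M_p$ into $k$-element subsets of $[d]\times\binom{[q]}{2}$ with $k=p/2$, and your injectivity argument is sound --- the pairs with label $i$ partition row $i$'s support, so their union reconstructs the row, and pairs are pairwise distinct (disjoint within a row, distinctly labeled across rows). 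What each approach buys: the paper's recursion never needs a canonical choice or an injectivity check, since the map from (matrix in $M_{p-2}$, row, column pair) onto $M_p$ only needs to be surjective, making the write-up two lines long; your encoding avoids induction, makes the overcounting transparent, and in fact delivers the marginally sharper bound $\binom{d\binom{q}{2}}{p/2}$ before relaxing to $(q^2d)^{p/2}$. Both handle the parity reduction identically, and both are complete proofs of the lemma.
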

\begin{proof}
Define $m_p=|M_p|$. We may assume that $p$ is even since otherwise $M_p$ is the emptyset and the result holds trivially.
Notice that every matrix in $M_{p}$ can be obtained from a matrix in $M_{p-2}$ by choosing a row in $[d]$ and replacing two zero entries in that row by ones. It follows that the values $m_p$ satisfy the recursion
$m_{p}\leq d\cdot {q\choose 2} \cdot m_{p-2}.$
Unrolling the recursion completes the proof.
\end{proof}

It will also be useful to count the number of binary matrices with prescribed column sums and rows sums modulo two. This is the content of the following corollary.

\begin{corollary}[Binary matrices II]\label{cor:mod2_cube}
    Fix an integer $q\in \mathbb{N}$, a vector $p\in \mathbb{N}^q$, and vector $v\in \{0,1\}^d$. Then the set of binary matrices
    \begin{equation}\label{eqn:binary_mat2}
    \left\{A\in \{0,1\}^{d\times q}: ~A \cdot{\mathbf{1}}_{q}=v~{\rm mod}~2~\textrm{and}~ A\tran \cdot {{\mathbf 1}}_{d}\leq p\right\},
    \end{equation}
    has cardinality bounded by $2^{q\|v\|_1+2}(q^2d)^{(\|p\|_1-\|v\|_1)/2}$.
\end{corollary}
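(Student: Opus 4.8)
The plan is to reduce the count to Lemma~\ref{lem:bin_matri} in two moves: convert the prescribed row-parity constraint $A\cdot \mathbf{1}_q = v \bmod 2$ into an all-even constraint, and relax the columnwise bound $A^\top \mathbf{1}_d \le p$ to a bound on the total number of ones.

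First I would set up an injection that strips off the odd parity (recall that $v$ is fixed, so the set $\{i : v_i = 1\}$ is known). Given an admissible $A$, every row $i$ with $v_i = 1$ has an odd, hence nonzero, row sum, so it contains an entry equal to $1$; let $c_i \in [q]$ be the index of the leftmost such entry. Define $B$ to be $A$ with each designated entry zeroed, i.e.\ $B_{i, c_i} = 0$ for every $i$ with $v_i = 1$ and $B = A$ otherwise, and record the tuple $c = (c_i)_{i : v_i = 1} \in [q]^{\|v\|_1}$. The map $A \mapsto (B, c)$ is injective, since $A$ is recovered by resetting $B_{i,c_i} = 1$. By construction $B$ has an even sum in every row (each previously odd row loses exactly one $1$), and its total number of ones is $\sum_{i,j} A_{ij} - \|v\|_1 \le \|p\|_1 - \|v\|_1 =: P$, where the inequality uses $\sum_{i,j} A_{ij} = \|A^\top \mathbf 1_d\|_1 \le \|p\|_1$.

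Next I would count the admissible $B$. Since there are $q^{\|v\|_1}$ choices for $c$, the injection gives
\[
\#\{A\} \;\le\; q^{\|v\|_1}\cdot \#\Bigl\{B \in \{0,1\}^{d\times q} : B\cdot \mathbf 1_q = 0 \bmod 2,\ \textstyle\sum_{i,j}B_{ij}\le P\Bigr\}.
\]
Grouping these matrices $B$ by their (necessarily even) total sum $p'$ and applying Lemma~\ref{lem:bin_matri} to each value yields $\#\{B\} \le \sum_{p'\text{ even},\,0\le p'\le P} (q^2 d)^{p'/2}$.

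Finally I would bound this geometric-type sum. Writing $r = q^2 d$, the case $r\ge 2$ (which holds unless $q=d=1$) gives $\sum_{k=0}^{\lfloor P/2\rfloor} r^k \le \tfrac{r}{r-1}\, r^{\lfloor P/2\rfloor} \le 2\,(q^2 d)^{P/2}$, while the degenerate case $q=d=1$ forces $B$ to be the single all-zero matrix so that $\#\{B\}\le 1$. Combining with $q^{\|v\|_1}\le 2^{q\|v\|_1}$ (valid since $\log_2 q \le q$) produces a bound of $2^{q\|v\|_1+1}(q^2 d)^{(\|p\|_1-\|v\|_1)/2}$, comfortably within the stated $2^{q\|v\|_1+2}(q^2 d)^{(\|p\|_1-\|v\|_1)/2}$. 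The step to get right is the parity-stripping injection: one must check that designating one $1$ per odd row is reversible and that the freed coordinate contributes exactly the factor $q^{\|v\|_1}$, and then tune the geometric-sum estimate so the residual constants fold into the claimed $2^{q\|v\|_1+2}$.
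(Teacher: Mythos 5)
Your proof is correct, and it treats the odd-parity rows by a different decomposition than the paper does. The paper permutes coordinates so that $v=(\mathbf{1}_s,0_{d-s})$ with $s=\|v\|_1$, enumerates all $2^{sq}$ possibilities for the submatrix $A_{\leq s,\leq q}$ outright, and then applies Lemma~\ref{lem:bin_matri} to the complementary $(d-s)\times q$ block, whose row sums are even and whose total sum is at most $\|p\|_1-\|v\|_1$ (deduced from the column constraint together with $\|A_{\leq s,\leq q}\|_1\geq \|v\|_1$). You instead strip exactly one $1$ from each odd row via the leftmost-entry injection $A\mapsto (B,c)$, which keeps the matrix at full size $d\times q$ and costs only a factor $q^{\|v\|_1}$ rather than $2^{sq}=2^{q\|v\|_1}$; your injectivity check (recover $A$ by resetting $B_{i,c_i}=1$), the parity bookkeeping (each odd row loses exactly one $1$), and the total-sum bound $\sum_{i,j}B_{ij}\leq \|p\|_1-\|v\|_1$ are all sound. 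Both arguments then finish identically: Lemma~\ref{lem:bin_matri} applied levelwise plus a geometric-sum estimate with the degenerate case $q=d=1$ split off (your factor $r/(r-1)\leq 2$ for $r=q^2d\geq 2$ versus the paper's factor $4$). What your route buys is a genuinely sharper prefactor, $2\,q^{\|v\|_1}$ in place of the paper's $4\cdot 2^{q\|v\|_1}$, which is a large saving when $q$ grows (though both sit comfortably inside the stated $2^{q\|v\|_1+2}$), and it avoids the WLOG coordinate permutation; the paper's version is marginally more direct in that it never needs the injectivity argument, only a crude count of submatrices.
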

\begin{proof}
To simplify notation, set $s=\|v\|_1$. Without loss of generality, we may permute the coordinates of $v$ in order to have the form $v=(\mathbf{1}_{s}, 0_{d-s})$. Let $A$ be any matrix contained in the set \eqref{eqn:binary_mat2}.
Clearly, the matrices contained in the set \eqref{eqn:binary_mat2} can have at most $2^{sq}$ possible submatrices $A_{\leq s,\leq q}$. Given such a submatrix $A_{\leq s,\leq q}$, the remaining submatrix $B=A_{>s,\leq q }$ lies in $\{0,1\}^{(d-s)\times q}$, the row sum $B\cdot\mathbf{1}_q$ is coordinate-wise even, and we have
$$\sum_{ij} B_{ij}=\sum_{j}\sum_{i}B_{ij}\leq\sum_{j} (p_j-\|A_{\leq s,\leq q}e_j\|_1)=\|p\|_1-\|A_{\leq s,\leq q}\|_1\leq \|p\|_1-\|v\|_1.$$
Appealing to Lemma~\ref{lem:bin_matri}, we see that the number of such matrices $B$ is bounded by $\sum_{k=0}^{\|p\|_1-\|v\|_1} |M_k|$. In the trivial setting $q= d= 1$, the cardinality of the set \eqref{eqn:binary_mat2} is clearly bounded by $2$. Now we consider the case when $d$ or $q$ is strictly larger than one. Then using the formula for the sum of a geometric series, we obtain
\begin{align*}
    \sum_{k=0}^{\norm{p}_1 - \norm{v}_1} |M_k| \leq \frac{(\sqrt{q^2d})^{\norm{p}_1 -\norm{v}_1 +1} - 1}{\sqrt{q^2d} - 1} \leq (\sqrt{q^2d})^{\norm{p}_1 -\norm{v}_1} \cdot \frac{\sqrt{q^2d}}{\sqrt{q^2d}-1}\leq 4(q^2d)^{(\|p\|_1-\|v\|_1)/2},
\end{align*}
thereby completing the proof.
\end{proof}

We now rephrase Corollary~\ref{cor:mod2_cube}  in terms of the Fourier-Walsh polynomials.

\begin{proposition}[Products of Fourier-Walsh monomials]\label{prop:basic_cpount_fourier}
Fix a Fourier-Walsh polynomial $x^{\tilde S}$ on $\{-1,1\}^d$ with a constant degree $|\tilde S|\leq \tilde p$ and fix some constants $q\in \mathbb{N}$ and $p\in \mathbb{N}^q$. Let $\mathcal{S}_1,\ldots, \mathcal{S}_q$ be families of sets such that each $\mathcal{S}_i$ consists of sets $S\subset [d]$ satisfying $|S|\leq p_i$. Then there are at most $O_d(d^{(\|p\|_1-\tilde p)/2})$ many tuples $(x^{S_1}, x^{S_2},\ldots,x^{S_q})$ with $S_i\in \mathcal{S}_i$ such that the polynomial $x^{S_1} x^{S_2}\cdots x^{S_q}$ coincides with $x^{\tilde S}$ on the hypercube.
\end{proposition}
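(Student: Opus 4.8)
The plan is to translate the multiplicative condition ``$x^{S_1}x^{S_2}\cdots x^{S_q}$ coincides with $x^{\tilde S}$ on the hypercube'' into a purely combinatorial counting problem about binary matrices, and then invoke Corollary~\ref{cor:mod2_cube}. The starting observation is that on $\mathbb{H}^d$ every coordinate satisfies $x_s^2=1$, so multiplying monomials collapses repeated factors in pairs. Concretely, $x^{S_1}\cdots x^{S_q}=\prod_s x_s^{c_s}$, where $c_s=\#\{i: s\in S_i\}$ counts how many of the sets contain $s$; since $x_s^{c_s}=x_s^{c_s \bmod 2}$, the product equals $x^{T}$ where $T=S_1\triangle S_2\triangle\cdots\triangle S_q$ is the symmetric difference of the index sets. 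Because distinct multilinear monomials are distinct functions on $\mathbb{H}^d$, the condition in the proposition is \emph{exactly} $S_1\triangle\cdots\triangle S_q=\tilde S$.

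Next I would encode each admissible tuple $(S_1,\dots,S_q)$ as a binary matrix $A\in\{0,1\}^{d\times q}$ whose $i$-th column is the indicator vector of $S_i$; this map is injective, so it suffices to count the resulting matrices. Under this encoding the $i$-th column sum equals $|S_i|$, so the degree bound $|S_i|\le p_i$ becomes the coordinatewise inequality $A^\top\mathbf{1}_d\le p$, while the $s$-th row sum equals $c_s$. The symmetric-difference condition says precisely that $c_s$ is odd exactly when $s\in\tilde S$, i.e. $A\cdot\mathbf{1}_q\equiv v\pmod 2$ where $v\in\{0,1\}^d$ is the indicator vector of $\tilde S$, so that $\|v\|_1=|\tilde S|$. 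Dropping the membership requirement $S_i\in\mathcal{S}_i$ (which can only shrink the count) leaves exactly the matrix family appearing in Corollary~\ref{cor:mod2_cube}.

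Applying Corollary~\ref{cor:mod2_cube} then bounds the number of such matrices by $2^{q\|v\|_1+2}(q^2d)^{(\|p\|_1-\|v\|_1)/2}=2^{q|\tilde S|+2}(q^2d)^{(\|p\|_1-|\tilde S|)/2}$. Since $q$ and the degree $|\tilde S|\le\tilde p$ are constants independent of $d$, the prefactor $2^{q|\tilde S|+2}$ and the factor $(q^2)^{(\|p\|_1-|\tilde S|)/2}$ are $O_d(1)$ and may be absorbed into the $O_d(\cdot)$, leaving the bound $O_d\!\left(d^{(\|p\|_1-|\tilde S|)/2}\right)$, which is the asserted estimate (with $\tilde p$ denoting the degree of $x^{\tilde S}$). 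Notice the hypothesis $|\tilde S|\le\tilde p$ plays a dual role: it guarantees the exponential prefactor stays uniformly bounded in $d$, and it controls the degree that surfaces in the final exponent.

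I expect the only real content to lie in the first step — recognizing that multiplication of Fourier--Walsh monomials on $\mathbb{H}^d$ \emph{is} symmetric difference of index sets, and then checking that the two structural constraints (column sums capped by $p$, row-sum parities dictated by $\tilde S$) match the hypotheses of Corollary~\ref{cor:mod2_cube} exactly. Everything after that is bookkeeping: verifying injectivity of the matrix encoding, observing that ignoring $S_i\in\mathcal{S}_i$ can only over-count, and confirming that the constant $q$ together with the bounded degree make all stray factors vanish into $O_d(\cdot)$.
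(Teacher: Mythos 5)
Your proof is correct and takes essentially the same route as the paper: encode each tuple $(S_1,\ldots,S_q)$ as a binary matrix whose columns are the indicator vectors $e_{S_1},\ldots,e_{S_q}$, observe the column-sum bound $A^\top\mathbf{1}_d\le p$ and the row-sum parity condition $A\cdot\mathbf{1}_q=e_{\tilde S}\ \mathrm{mod}\ 2$, and apply Corollary~\ref{cor:mod2_cube} with $v=e_{\tilde S}$. Your parenthetical reading $\tilde p=|\tilde S|$ is in fact what the paper's own proof delivers as well (its application of Corollary~\ref{cor:mod2_cube} produces the exponent $(\|p\|_1-|\tilde S|)/2$, and this is how the proposition is used later with $p^*=|S^*|$), and your added bookkeeping --- the symmetric-difference translation, injectivity of the encoding, and absorbing the prefactor $2^{q\|v\|_1+2}(q^2)^{(\|p\|_1-\|v\|_1)/2}$ into $O_d(\cdot)$ --- only spells out steps the paper leaves implicit.
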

\begin{proof}
For any set $S\subset[d]$, let $e_S\in\{0,1\}^d$ be a vector having ones along all coordinates in $S$ and zero otherwise. Then every tuple $(x^{S_1}, x^{S_2},\ldots,x^{S_q})$ with $S_i\in \S_i$ can be identified with a binary matrix $A\in \{0,1\}^{d\times q}$ having $e_{S_1},\ldots, e_{S_q}$ as its columns. Since each set $S\in \S_i$ satisfies $|S|\leq p_i$, we deduce $A\tran \cdot {{\mathbf 1}}_{d}\leq p$. Moreover, the product $x^{S_1} x^{S_2}\cdots x^{S_q}$ coincides with $x^{\tilde S}$ on the Hypercube precisely when equality $A \cdot{\mathbf{1}}_{q}=e_{\tilde S}~{\rm mod}~2$ holds. An application of Corollary~\ref{cor:mod2_cube} with $v=e_{\tilde S}$ completes the proof.
\end{proof}

Proposition \ref{prop:fourier_basis_product_card} below bounds an expectation of a sum of Fourier Walsh monomials over varying index sets. The key idea is to use Proposition \ref{prop:basic_cpount_fourier} in order to count the number of nonzero summands. 

\begin{proposition}[Sums of Fourier-Walsh polynomials]\label{prop:fourier_basis_product_card}
Fix some constants $q\in \mathbb{N}$ and $p\in \mathbb{N}^q$, and
fix a Fourier-Walsh polynomial $x^{S^*}$ on $\R^d$ with constant degree $p^*=|S^*|$.  Let $\mathcal{S}_1,\ldots, \mathcal{S}_q$ be collections of sets such that each collection $\mathcal{S}_i$ consists of sets $S\subset [d]$ satisfying $|S|\leq p_i$. Then the following holds:
\begin{align}\label{eq:card_bound_2}
     \sum_{S_1\in \S_1,S_2\in \S_2\ldots,S_q \in \S_q}\E\brac{x^{S_1}x^{S_2}\ldots x^{S_q} x^{S^*}} = O_{d}\round{d^{(\|p\|_1 - p^*)/2}}.
\end{align}
 \end{proposition}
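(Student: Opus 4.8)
The plan is to reinterpret the left-hand side of \eqref{eq:card_bound_2} as a counting problem and then invoke Proposition~\ref{prop:basic_cpount_fourier} directly. The starting point is the orthonormality of the Fourier-Walsh basis under the uniform measure on $\mathbb{H}^d$: for any set $T\subseteq[d]$ one has $\E[x^T]=1$ if $T=\emptyset$ and $\E[x^T]=0$ otherwise. Since $x_i^2\equiv 1$ on $\mathbb{H}^d$, the product $x^{S_1}x^{S_2}\cdots x^{S_q}x^{S^*}$ collapses to a single monomial $x^T$, where $T$ is the set of coordinates appearing an odd number of times among $S_1,\ldots,S_q,S^*$ (that is, the iterated symmetric difference of these sets). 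Consequently each summand $\E[x^{S_1}\cdots x^{S_q}x^{S^*}]$ equals the indicator of the event that $x^{S_1}\cdots x^{S_q}$ coincides with $x^{S^*}$ on the hypercube.

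Because every summand is either $0$ or $1$, the sum on the left of \eqref{eq:card_bound_2} is exactly the number of tuples $(S_1,\ldots,S_q)$ with $S_i\in\mathcal{S}_i$ for which $x^{S_1}\cdots x^{S_q}$ agrees with $x^{S^*}$ on $\mathbb{H}^d$. This is precisely the quantity estimated in Proposition~\ref{prop:basic_cpount_fourier}. I would apply that proposition with $\tilde S=S^*$ and $\tilde p=p^*$ (so that the degree hypothesis $|\tilde S|\le \tilde p$ holds with equality), which immediately yields the claimed bound $O_d(d^{(\|p\|_1-p^*)/2})$.

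The argument has no genuinely difficult step; the only point requiring care is the reduction of the expectation to a $\{0,1\}$-valued indicator, which rests on the two facts above, namely orthonormality of the basis and the identity $x_i^2\equiv 1$ on the hypercube. Once this reduction is in place, the estimate is an immediate consequence of the counting bound already established in Proposition~\ref{prop:basic_cpount_fourier}.
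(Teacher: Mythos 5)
Your proof is correct and follows essentially the same route as the paper: reduce each expectation to the indicator that $x^{S_1}\cdots x^{S_q}$ coincides with $x^{S^*}$ on the hypercube (via $x_i^2\equiv 1$ and orthonormality), then invoke Proposition~\ref{prop:basic_cpount_fourier} with $\tilde S=S^*$ to count the nonzero summands. Your explicit remark that the hypothesis $|\tilde S|\le\tilde p$ is applied with equality $\tilde p=p^*$ is a sound point of care that the paper leaves implicit.
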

\begin{proof}
Observe that the expectation $\E[x^{S_1} x^{S_2}\cdots x^{S_q}x^{S^*}]$ of each summand in \eqref{eq:card_bound_2} is one if $x^{S_1} x^{S_2}\cdots x^{S_q}$ coincides with $x^{S^*}$ on the hypercube and is zero otherwise. Corollary~\ref{prop:basic_cpount_fourier} in turn shows that 
the number of ways of writing $x^{S^*}$ as $x^{S_1} x^{S_2}\cdots x^{S_q}$ for some $S_1\in \S_1,S_2\in \S_2\ldots,S_q \in \S_q$, or equivalently the number of nonzero summands in \eqref{eq:card_bound_2},
 is $O_d(d^{(\|p\|_1-p^*)/2})$, thereby confirming \eqref{eq:card_bound_2}.
\end{proof}

We now extend Proposition \ref{prop:fourier_basis_product_card} to include {\em weighted} sums of Fourier-Walsh polynomials.  The following proposition provides several key inequalities that control the magnitude of these weighted sums in terms of the norms of the coefficient vectors. Additionally, we allow to enforce equalities between some of the sets $S_i$ that we sum over, which will be important in the next section. This proposition is the main result of the section.

\begin{proposition}[Weighted sums of Fourier-Walsh polynomials]\label{prop:fourier_basis_product_card_2}
Fix some some constants $q\in \mathbb{N}$ and $p\in \mathbb{N}^q$ and
fix a Fourier-Walsh polynomial $x^{S^*}$ on $\R^d$ with constant degree $p^*=|S^*|$. Let $\mathcal{S}_1,\ldots, \mathcal{S}_q$ be collections of sets such that each collection $\mathcal{S}_i$ consists of sets $S\subset [d]$ satisfying $|S|\leq p_i$.  Then for any vectors $a\in \R^{S_{q-1}}$ and $b\in \R^{S_{q}}$, the four estimates hold:
\begin{align}\label{eq:card_bound_3_group}
\sum_{S_1\in \S_1,S_2\in \S_2\ldots,S_q \in \S_q}b_{S_q}\E\brac{x^{S_{1}}x^{S_{2}}\ldots x^{S_{q}}} = \norm{b}_2\cdot O_{d}(d^{(\sum_{t=1}^{q-1} p_t)/2}),   
\end{align}

\begin{align}\label{eq:card_bound_5}
     \sum_{S_1\in \S_1,S_2\in \S_2\ldots,S_q \in \S_q}b_{S_q}\E\brac{x^{S_1}x^{S_2}\ldots x^{S_q}x^{S^*}} = \norm{b}_2\cdot O_{d}(d^{(\sum_{t=1}^{q-1} p_t )/2}),
\end{align}
\begin{align}\label{eq:card_bound_4}
     \sum_{S_1\in \S_1,S_2\in \S_2\ldots,S_q \in \S_q}a_{S_{q-1}}b_{S_q}\E\brac{x^{S_1}x^{S_2}\ldots x^{S_{q-1}}x^{S_q}} = \norm{a}_2 \norm{b}_2\cdot O_{d}(d^{(\sum_{t=1}^{q-2} p_t)/2}).
\end{align}
\begin{align}\label{eq:card_bound_6}
     \sum_{S_1\in \S_1,S_2\in \S_2\ldots,S_q \in \S_q}a_{S_{q-1}}b_{S_q}\E\brac{x^{S_1}x^{S_2}\ldots x^{S_{q-1}}x^{S_q}x^{S^*}} = \norm{a}_2 \norm{b}_2\cdot O_{d}(d^{(\sum_{t=1}^{q-2} p_t)/2}).
\end{align}

\end{proposition}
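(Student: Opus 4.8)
The plan is to prove the two substantive estimates \eqref{eq:card_bound_5} and \eqref{eq:card_bound_6}; the remaining two, \eqref{eq:card_bound_3_group} and \eqref{eq:card_bound_4}, are the special cases $S^*=\emptyset$ (i.e. $x^{S^*}\equiv 1$) and need no separate argument. Throughout I identify $S\subseteq[d]$ with its indicator in $2^{[d]}$ and use $x^{S}x^{S'}=x^{S\triangle S'}$, so $\E[x^{S_1}\cdots x^{S_k}]=\1[S_1\triangle\cdots\triangle S_k=\emptyset]$. For \eqref{eq:card_bound_5} I would group by the weighted index, writing the sum as $\sum_{S_q}b_{S_q}c_{S_q}$ with $c_{S_q}=\sum_{S_1,\dots,S_{q-1}}\E[x^{S_1}\cdots x^{S_{q-1}}x^{S_q}x^{S^*}]$ an \emph{unweighted} count. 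Cauchy--Schwarz gives $|\sum_{S_q}b_{S_q}c_{S_q}|\le\norm{b}_2(\sum_{S_q}c_{S_q}^2)^{1/2}$, and the point is that $\sum_{S_q}c_{S_q}^2$ is again unweighted: it counts pairs of tuples with equal products, a condition in which $x^{S^*}$ cancels, so it equals an unweighted sum over $2(q-1)$ collections with target $\emptyset$ and Proposition~\ref{prop:fourier_basis_product_card} bounds it by $O_d(d^{\sum_{t=1}^{q-1}p_t})$. Taking square roots gives \eqref{eq:card_bound_5} (and explains why it is insensitive to $S^*$).

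The two-weight bound \eqref{eq:card_bound_6} is the crux. I would write it as the bilinear form $a^\top C b$ with $C_{S_{q-1},S_q}=N(S_{q-1}\triangle S_q\triangle S^*)$, where, setting $u=(S_1,\dots,S_{q-2})$ and $\bigtriangleup u=S_1\triangle\cdots\triangle S_{q-2}$, we have $N(T):=\#\{u:\bigtriangleup u=T\}$. First extract $\norm{a}_2$ via $|a^\top Cb|\le\norm{a}_2\norm{Cb}_2$, reducing to $\norm{Cb}_2^2\le\norm{b}_2^2\,O_d(d^{\sigma})$ with $\sigma:=\sum_{t=1}^{q-2}p_t$. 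Since $\norm{Cb}_2^2=\sum_{S_{q-1}\in\S_{q-1}}(Cb)_{S_{q-1}}^2$ is a sum of squares, it only grows if I extend the outer index $S_{q-1}$ over all of $2^{[d]}$, which legitimately drops the membership constraint $S_{q-1}\in\S_{q-1}$. Expanding and summing the free outer index forces the two inner tuples to have equal products (cancelling both $S^*$ and $S_{q-1}$), leaving
\begin{equation*}
\norm{Cb}_2^2\le \sum_{S_q,S_q'\in\S_q} b_{S_q}b_{S_q'}\,(N* N)(S_q\triangle S_q')=:b^\top \tilde C b ,
\end{equation*}
where $(N* N)(W)=\#\{(u,u'):(\bigtriangleup u)\triangle(\bigtriangleup u')=W\}$ is the $2^{[d]}$-autocorrelation of $N$ and $\tilde C_{S_q,S_q'}=(N* N)(S_q\triangle S_q')$ is symmetric and positive semidefinite.

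It then suffices to show $\norm{\tilde C}_{\mathrm{op}}=O_d(d^{\sigma})$, since $b^\top\tilde Cb\le\norm{b}_2^2\norm{\tilde C}_{\mathrm{op}}$. Two ingredients feed in. First, Proposition~\ref{prop:basic_cpount_fourier}, applied to the $2(q-2)$ collections $\S_1,\dots,\S_{q-2}$ listed twice with target $x^{W}$, gives the decaying pointwise bound $(N* N)(W)=O_d(d^{\sigma-|W|/2})$. Second --- and this is the key step --- I would bound $\norm{\tilde C}_{\mathrm{op}}$ by the \emph{weighted} Schur test with the degree-graded weights $\omega_{S_q}=d^{-|S_q|/2}$:
\begin{equation*}
\norm{\tilde C}_{\mathrm{op}}\le \max_{S_q\in\S_q}\ \frac{1}{\omega_{S_q}}\sum_{S_q'\in\S_q}\tilde C_{S_q,S_q'}\,\omega_{S_q'} .
\end{equation*}
Substituting the pointwise bound and using the identity $|S_q|-|S_q\triangle S_q'|-|S_q'|=-2\,|S_q'\setminus S_q|$, the right-hand side collapses to $O_d(d^{\sigma})\cdot\max_{S_q}\sum_{S_q'\in\S_q}d^{-|S_q'\setminus S_q|}$, and the inner sum is $O_d(1)$: with $|S_q'|\le p_q$ constant there are only $O_d(d^{r})$ sets $S_q'$ with $|S_q'\setminus S_q|=r$, each contributing $d^{-r}$. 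This yields $\norm{\tilde C}_{\mathrm{op}}=O_d(d^{\sigma})$ and hence \eqref{eq:card_bound_6}.

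I expect the operator-norm estimate in the last paragraph to be the only genuinely non-routine part. Every naive substitute overshoots the target by a factor $d^{p_q/2}$: bounding $\norm{\tilde C}_{\mathrm{op}}$ by its Frobenius norm, by the unweighted Schur bound (maximum row sum), or by bounding $(Cb)_{S_{q-1}}$ uniformly and summing, all fail because they ignore the cancellation encoded in the operator norm. Concretely, the rows of $\tilde C$ indexed by low-degree sets $S_q$ (e.g. $S_q=\emptyset$) carry anomalously large $\ell_1$-mass but are few in number, so they inflate worst-case row sums without inflating $\norm{\tilde C}_{\mathrm{op}}$. The weight $\omega_{S_q}=d^{-|S_q|/2}$ is precisely what rebalances these rows; discovering it is the one step not dictated by the counting lemmas, after which everything reduces to Propositions~\ref{prop:basic_cpount_fourier} and~\ref{prop:fourier_basis_product_card}.
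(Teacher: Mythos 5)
Your proof is correct, and it takes a genuinely different route from the paper's. The paper proves all four estimates separately: for \eqref{eq:card_bound_5} it stratifies the sum by the intersection size $k=|S_q\cap S^*|$ and applies the $\ell_1$--$\ell_2$ equivalence within each stratum; for \eqref{eq:card_bound_6} it stratifies by the three parameters $(k,j_1,j_2)$ recording how $S_{q-1},S_q,S^*$ intersect, bounds each stratum's bidjacency matrix via the unweighted bound $\snorm{A}\le\sqrt{\norm{A}_1\norm{A}_\infty}$ (valid there because within a fixed stratum the vertex degrees are uniform), and sums over the $O_d(1)$ strata. You instead (i) obtain \eqref{eq:card_bound_3_group} and \eqref{eq:card_bound_4} as the case $S^*=\emptyset$ of \eqref{eq:card_bound_5} and \eqref{eq:card_bound_6}, (ii) prove \eqref{eq:card_bound_5} by a doubling argument---Cauchy--Schwarz in $b$ followed by counting pairs of tuples with equal products, i.e.\ Proposition~\ref{prop:fourier_basis_product_card} applied to the $2(q-1)$ collections with target $\emptyset$---and (iii) prove \eqref{eq:card_bound_6} by squaring away $a$ and the free index $S_{q-1}$ to form the autocorrelation kernel $\tilde C$ and running a weighted Schur test with degree-graded weights $d^{-|S_q|/2}$. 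I checked the details: extending the outer sum over all of $2^{[d]}$ only adds squares; the substitution $W'=S_{q-1}\,\Delta\, S_q\,\Delta\, S^*$ does cancel both $S^*$ and $S_{q-1}$; the pointwise bound $(N*N)(W)=O_d(d^{\sigma-|W|/2})$ follows from Proposition~\ref{prop:basic_cpount_fourier}, with constants uniform over the finitely many values $|W|\le 2p_q$; the identity $|S_q|-|S_q\,\Delta\, S_q'|-|S_q'|=-2|S_q'\setminus S_q|$ is correct; and there are indeed only $O_d(d^r)$ sets $S_q'\in\S_q$ with $|S_q'\setminus S_q|=r$, so the weighted row sums are $O_d(d^\sigma)$ and $\eqref{eq:card_bound_6}$ follows with the stated exponent. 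As for what each approach buys: the paper's stratification keeps every step elementary (maximum row/column sums suffice once the degrees are uniform within a stratum) at the cost of a triple-indexed decomposition, while your single weight $d^{-|S_q|/2}$ performs the same rebalancing globally, handles the signs of $a,b$ without the paper's nonnegativity reduction, and makes the insensitivity of the bounds to $S^*$ structurally transparent; your diagnosis that the unweighted Schur and Frobenius bounds overshoot by $d^{p_q/2}$ is accurate. One small remark for downstream use: your bounds transfer to sums restricted to a subset $\Gamma\subseteq\S_1\times\cdots\times\S_q$ by the same nonnegativity observation as in Remark~\ref{rem:needed_remark}, which is the form in which the proposition is invoked in Section~\ref{sec:comp_proof}.
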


\begin{proof}
Without loss of generality, we may assume that $a_{S_q},b_{S_q}$ are nonnegative since the expectations in the sum are all nonnegative. 
We begin by verifying \eqref{eq:card_bound_3_group}.
Taking the summation over $S_q$ first, the left side of equation \eqref{eq:card_bound_3_group} is clearly bounded by:
    \begin{equation}\label{eqn:bound_interm_appp}
         \sum_{S_q \in \S_q}b_{S_q}\sum_{S_1\in \S_1,S_2\in \S_2\ldots, S_{q-1}\in \S_{q-1}}\E\brac{x^{S_1}x^{S_2}\ldots x^{S_q}}.
    \end{equation}
Invoking Eq.~(\ref{eq:card_bound_2}) with $S^*=S_q$, we immediately deduce that \eqref{eqn:bound_interm_appp} is bounded by 
\begin{align*}
      \sum_{S_q \in \S_q} b_{S_q} \cdot O_d(d^{(\sum_{t=1}^{q-1}p_t - p_q)/2})
    &\leq O_d(d^{(\sum_{t=1}^{q-1}p_t)/2})\sum_{S_q \in \S_q}b_{S_q} \cdot O_d(d^{-p_q/2})\\
    &=O_d(d^{(\sum_{t=1}^{q-1}p_t)/2})\cdot \|b\|_1\cdot O_d(|\mathcal{S}_q|^{-1/2})\\
    &= O_d(d^{(\sum_{t=1}^{q-1}p_t)/2})\cdot \norm{b}_2
\end{align*}
where the last inequality uses the equivalence of $\ell_1$ and $\ell_2$ norms.

Next, we verify Eq.~(\ref{eq:card_bound_5}). Similarly, we take the summation over $S_q$ first and  the left side of equation \eqref{eq:card_bound_5} becomes:
\begin{align}\label{eq:bound_5_interm}
     \sum_{S_q \in \S_q}b_{S_q}\sum_{S_1\in \S_1,S_2\in \S_2\ldots,S_{q-1} \in \S_{q-1}}\E\brac{x^{S_1}x^{S_2}\ldots x^{S_q}x^{S^*}}.
\end{align}
We now decompose the outer sum based on the size of the intersection $k=|S_q\cap S^*|$, which ranges from $k=1,\ldots,p_q \wedge p^*$. We thus obtain
\begin{align}
     &~~~\sum_{S_q \in \S_q}b_{S_q}\sum_{S_1\in \S_1,S_2\in \S_2\ldots,S_{q-1} \in \S_{q-1}}\E\brac{x^{S_1}x^{S_2}\ldots x^{S_q}x^{S^*}}\notag\\
     &= \sum_{k=0}^{p_q \wedge p^*}\sum_{S_q\in \S_q, |S_q\cap S^*| = k} b_{S_q} \sum_{S_1\in \S_1,S_2\in \S_2\ldots,S_{q-1} \in \S_{q-1}}\E\brac{x^{S_1}x^{S_2}\ldots x^{S_q}x^{S^*}}\notag\\
     &= \sum_{k=0}^{p_q \wedge p^*}\sum_{S_q\in \S_q, |S_q\cap S^*| = k} b_{S_q} \cdot O_d\round{d^{\sum_{t=1}^{q-1}p_t/2- (p_q+p^* - 2k)/2}}\label{eq:bound_5_interm_2}.
\end{align}
where the last inequality follows from equation~(\ref{eq:card_bound_2}).

Observe that the number of sets $S_q\in \S_q$ satisfying $|\S_q\cap S^*| = k$ is of the order $O_d(d^{p_q-k})$. Therefore Eq.~(\ref{eq:bound_5_interm_2}) can be further bounded by
\begin{align}
    &~~~O_d\round{d^{(\sum_{t=1}^{q-1}p_t)/2}}\cdot\sum_{k=0}^{p_q \wedge p^*}\sum_{S_q\in \S_q, |S_q\cap S^*| = k} b_{S_q} \cdot O_d\round{d^{- (p_q+p^* - 2k)/2}}\notag \\
    &=O_d\round{d^{(\sum_{t=1}^{q-1}p_t)/2}} \sum_{k=0}^{p_q \wedge p^*} \sqrt{\sum_{S_q\in \S_q, |S_q\cap S^*| = k} b_{S_q}^2}\cdot O_d(d^{-(p^*-k)/2})\label{eqn:CS_ineqblue}\\
    &= O_d\round{d^{(\sum_{t=1}^{q-1}p_t)/2}} \sum_{k=0}^{p_q \wedge p^*} \sqrt{\sum_{S_q\in \S_q, |S_q\cap S^*| = k} b_{S_q}^2}\cdot O_d(1)\notag\\
    &= O_d\round{d^{(\sum_{t=1}^{q-1}p_t)/2}} \cdot{\norm{b}_2},\label{eqn:need_expr_norm}
\end{align}
where \eqref{eqn:CS_ineqblue} follows from the equivalence of $\ell_1$ and $\ell_2$ norms.

Next, we verify~\eqref{eq:card_bound_4}. To this end, we will break up the sum \eqref{eq:card_bound_4} by looking at the size of the intersection $|S_{q}\cap S_{q-1}|$, which clearly ranges from zero to $p_q\wedge p_{q-1}$. With this in mind, the left side of \eqref{eq:card_bound_4} can be equivalently written as
\begin{align}\label{eq:bound_a_b_1}
&=    \sum_{k=0}^{p_q\wedge p_{q-1}}\sum_{\substack{S_{q-1}\in \S_{q-1},S_{q}\in \S_{q}\\|S_{q}\cap S_{q-1}| = k }}a_{S_{q-1}}   b_{S_q}  \sum_{S_1\in \S_1,S_2\in \S_2\ldots, S_{q-2}\in \S_{q-2}}\E\brac{x^{S_1}x^{S_2}\ldots x^{S_{q-1}}x^{S_q}}.
\end{align}
Fix now $S_{q-1}\in \S_{q-1}$ and $S_{q}\in \S_{q}$ satisfying $|S_{q}\cap S_{q-1}| = k$. 
Observe that the polynomial $x^{S_{q-1}}x^{S_q}$ coincides on the Hypercube with $x^{S^*}$ where the set $S^*:=S_{q-1}\cup S_q\setminus (S_{q-1}\cap S_q)$ has cardinality $|S^*|=p_{q-1}+p_q-2k$. 
Thus, invoking Eq.~(\ref{eq:card_bound_2}) yields
\begin{align*}
    \sum_{S_1\in \S_1,S_2\in \S_2\ldots, S_{q-2}\in \S_{q-2}}\E\brac{x^{S_1}x^{S_2}\ldots x^{S_{q-1}}x^{S_q}} = O_d(d^{(\sum_{t=1}^{q-2}p_t - p_{q-1} - p_q)/2 + k}).
\end{align*}
Plugging this expression into Eq.~\eqref{eq:bound_a_b_1}, shows that \eqref{eq:bound_a_b_1} is upper-bounded by 
\begin{align}\label{eq:bound_a_b_2}
    \sum_{k=0}^{p_q\wedge p_{q-1}} O_d(d^{(\sum_{t=1}^{q-2}p_t - p_{q-1} - p_q)/2 + k}) \cdot \sum_{\substack{S_{q-1}\in \S_{q-1},S_{q}\in \S_{q}\\|S_q\cap S_{q-1}|=k}}a_{S_{q-1}}   b_{S_q}.
\end{align}

It remains  to upper-bound the inner-sum in \eqref{eq:bound_a_b_2}. To this end, define an undirected bipartite graph $G = (V_1\cup V_2,E)$ where $V_1 = \S_{q-1}$ and $V_2 = \S_{q}$ are disjoint sets of vertices and an 
 edge $(S_{q-1},S_q) \in E$ is present if the equation $|S_q\cap S_{q-1}|=k$ holds. We further define a bidjacency matrix $A$  of $G$ by 
\begin{align}\label{eq:a_b_bidjacency}
    A_{S_{q-1},S_q} = \begin{cases}
        &1,~~~{\rm if}~(S_{q-1},S_q) \in E,\\
        &0,~~~{\rm otherwise.}
    \end{cases}
\end{align}
Then, clearly we can write
\begin{equation}\label{eq:bound_a_b_3}
    \sum_{\substack{S_{q-1}\in \S_{q-1},S_{q}\in \S_{q}\\{|S_q\cap S_{q-1}|=k}}}a_{S_{q-1}}   b_{S_q} =  a^\top Ab\leq \|A\|_{\rm op} \|a\|_2\|b\|_2.
\end{equation}
Note that the maximal degree of every vertex in $V_1$ is of the order $O_d(d^{p_q - k})$ and in $V_2$ is  of the order  $O_d(d^{p_{q-1} - k})$. In this case, the matrix $A$  satisfies $\norm{A}_1 = O_d(d^{p_{q-1} - k})$ and $\norm{A}_\infty =O_d(d^{p_q - k})$, where $\norm{A}_1$ and $\norm{A}_\infty$ denote the maximum column norm and maximum row norm, respectively. We therefore deduce
\begin{align}\label{eq:a_b_norm_A}
    \snorm{A}\leq \sqrt{\norm{A}_1\norm{A}_\infty } =O_d(d^{(p_{q-1}+p_q)/2 - k}),
\end{align}
where the first inequality is valid for any matrix. Therefore, the right side of \eqref{eq:bound_a_b_3} is bounded by 
$ O_d(d^{(p_{q-1}+p_q)/2 - k})\cdot\norm{a}_2\norm{b}_2.$
Plugging this back into Eq.~(\ref{eq:bound_a_b_2}) completes the proof.

Lastly, we verify (\ref{eq:card_bound_6}). Similar to Eq.~(\ref{eq:bound_a_b_1}), we break up the sum by looking at the size of the intersection $S_q\cap S_{q-1}$ and $(S_q \Delta S_{q-1})\cap S^*$, where $S_q \Delta S_{q-1} := [(S_q \cup S_{q-1}) \slash (S_{q}\cap S_{q-1})]$ denotes the symmetric difference between $S_q$ and $S_{q-1}$. Then the left side of Eq.~(\ref{eq:card_bound_6}) can be written as
\begin{align}\label{eq:bound_a_b_4}
\sum_{k=0}^{p_q\wedge p_{q-1}}\sum^{(p_{q} + p_{q-1}-2k) \wedge p^*}_{j=0} \sum_{\substack{S_{q-1}\in \S_{q-1},S_{q}\in \S_{q}\\|S_{q}\cap S_{q-1}| = k  \\ \abs{(S_q \Delta  S_{q-1})\cap S^*} = j}}a_{S_{q-1}}   b_{S_q} \round{\sum_{S_1\in \S_1,\ldots, S_{q-2}\in \S_{q-2}}\E\brac{x^{S_1}x^{S_2}\ldots x^{S_{q-1}}x^{S_q}x^{S^*}}}.
\end{align}
We fix $S_{q-1} \in \S_{q-1}$ and $S_{q}\in \S_q$ satisfying $|S_{q}\cap S_{q-1}| = k$ and $\abs{(S_q \Delta  S_{q-1})\cap S^*} = j$. We treat the product $x^{S_{q-1}}x^{S_q}x^{S^*}$ as the monomial $x^{S^*}$ in Eq.~(\ref{eq:card_bound_2}), which has cadinality at least $p_{q}+p_{q-1}+ p^*- 2k - 2j$. Thus, invoking Eq.~(\ref{eq:card_bound_2}) yields
\begin{align*}
    \sum_{S_1\in \S_1,\ldots, S_{q-2}\in \S_{q-2}}\E\brac{x^{S_1}x^{S_2}\ldots x^{S_{q-1}}x^{S_q}x^{S^*}} =  O_d(d^{(\sum_{t=1}^{q-2}p_t - p_{q-1} - p_q - p^*)/2 + k + j}).
\end{align*}
Plugging this expression into Eq.~\eqref{eq:bound_a_b_4} shows that \eqref{eq:bound_a_b_4} is upper-bounded by
\begin{align}\label{eq:bound_a_b_5}
    \sum_{k=0}^{p_q\wedge p_{q-1}}\sum^{(p_{q} + p_{q-1}-2k) \wedge p^*}_{j=0}  O_d(d^{(\sum_{t=1}^{q-2}p_t - p_{q-1} - p_q - p^*)/2 + k + j}) \cdot \sum_{\substack{S_{q-1}\in \S_{q-1},S_{q}\in \S_{q}\\|S_{q}\cap S_{q-1}| = k  \\ \abs{(S_q \Delta  S_{q-1})\cap S^*} = j}}a_{S_{q-1}}   b_{S_q}.
\end{align}

It remains  to upper-bound the inner-sum in~\eqref{eq:bound_a_b_5}. To this end, we split $(S_q \Delta  S_{q-1})\cap S^*$ as a disjoint union of $[S_{q-1} \slash (S_{q}\cap S_{q-1})]\cap S^*$ and $[S_{q} \slash (S_{q}\cap S_{q-1})]\cap S^*$. Then we write the inner-sum in~\eqref{eq:bound_a_b_5} as
\begin{align}\label{eq:bound_a_b_6}
    \sum_{\substack{S_{q-1}\in \S_{q-1},S_{q}\in \S_{q}\\|S_{q}\cap S_{q-1}| = k  \\ \abs{(S_q \Delta  S_{q-1})\cap S^*} = j}}a_{S_{q-1}}   b_{S_q} = \sum_{\substack{j_1,j_2 : j_1+j_2 = j }}\sum_{\substack{S_{q-1}\in \S_{q-1},S_{q}\in \S_{q}\\|S_{q}\cap S_{q-1}| = k \\  \abs{[S_{q-1} \slash (S_{q}\cap S_{q-1})]\cap S^*} = j_1  \\ \abs{[S_{q} \slash (S_{q}\cap S_{q-1})]\cap S^*} = j_2}}a_{S_{q-1}}   b_{S_q}.
\end{align}
Now we fix $j_1$ and $j_2$. Similar to the proof of \eqref{eq:card_bound_4}, we define an undirected bipartite graph $G = (V_1\cup V_2,E)$ where $V_1 = \S_{q-1}$ and $V_2 = \S_{q}$ and an 
 edge $(S_{q-1},S_q) \in E$ is present if the equations $|S_q\cap S_{q-1}|=k$, $\abs{[S_{q-1} \slash (S_{q}\cap S_{q-1})]\cap S^*} = j_1$ and $\abs{[S_{q} \slash (S_{q}\cap S_{q-1})]\cap S^*} = j_2$ hold. Accordingly, we define a bidjacency matrix $A$ of $G$ as in Eq.~(\ref{eq:a_b_bidjacency}).

Note that the maximal degree of every vertex in $V_1$ is of the order $O_d(d^{p_q - k - j_2})$ and in $V_2$ is  of the order  $O_d(d^{p_{q-1} - k- j_1})$. The same argument as the one establishing Eq.~(\ref{eq:a_b_norm_A}) therefore implies 
 \begin{align*}
     \snorm{A}\leq \sqrt{\norm{A}_1\norm{A}_\infty } =O_d(d^{(p_{q-1}+p_q - j)/2 - k}).
 \end{align*}
 Then for each $j_1$ and $j_2$, we deduce
\begin{align*}
\sum_{\substack{S_{q-1}\in \S_{q-1},S_{q}\in \S_{q}\\|S_{q}\cap S_{q-1}| = k \\  \abs{[S_{q-1} \slash (S_{q}\cap S_{q-1})]\cap S^*} = j_1  \\ \abs{[S_{q} \slash (S_{q}\cap S_{q-1})]\cap S^*} = j_2}}a_{S_{q-1}}   b_{S_q} = a\tran A b \leq  \snorm{A} \|a\|_2\|b\|_2 = \|a\|_2\|b\|_2\cdot O_d(d^{(p_{q-1}+p_q - j)/2 - k}).
\end{align*}

Since $j$ is of the order $O_d(1)$, plugging the above bound into Eq.~\eqref{eq:bound_a_b_6} gives
\begin{align*}
      \sum_{\substack{S_{q-1}\in \S_{q-1},S_{q}\in \S_{q}\\|S_{q}\cap S_{q-1}| = k  \\ \abs{(S_q \Delta  S_{q-1})\cap S^*} = j}}a_{S_{q-1}}   b_{S_q} = \|a\|_2\|b\|_2\cdot O_d(d^{(p_{q-1}+p_q - j)/2 - k}).
\end{align*}
Subsequently,  plugging this estimate into Eq.~\eqref{eq:bound_a_b_5} yields an upper bound for Eq.~\eqref{eq:bound_a_b_4}:
\begin{align*}
      &~~~\sum_{k=0}^{p_q\wedge p_{q-1}}\sum^{(p_{q} + p_{q-1}-2k) \wedge p^*}_{j=0}  O_d(d^{(\sum_{t=1}^{q-2}p_t - p_{q-1} - p_q - p^*)/2 + k + j}) \cdot \|a\|_2\|b\|_2\cdot O_d(d^{(p_{q-1}+p_q - j)/2 - k}) \\
      &=\sum_{k=0}^{p_q\wedge p_{q-1}}\sum^{(p_{q} + p_{q-1}-2k) \wedge p^*}_{j=0}  \norm{a}_2\norm{b}_2\cdot O_d(d^{(\sum_{t=1}^{q-2}p_t)/2})\\
      &=\norm{a}_2\norm{b}_2\cdot O_d(d^{(\sum_{t=1}^{q-2}p_t)/2})
\end{align*}
where the first equality follows from the fact 
that $p^* \geq j$ holds and the second equality follows from the fact that $j$ and $k$ scale as $O_d(1)$. The proof is complete.
\end{proof}

\begin{remark}\label{rem:needed_remark}{\rm 
In Section~\ref{sec:comp_proof}, we will consider sums of the form \eqref{eq:card_bound_3_group}-\eqref{eq:card_bound_6} but where the tuples $(S_1,\ldots, S_q)$ may be further constrained to lie in some set $\Gamma\subseteq \S_1\times \ldots\S_q$. Then all the estimates \eqref{eq:card_bound_3_group}-\eqref{eq:card_bound_6} remain true with the sum $\sum_{S_1\in \S_1,S_2\in \S_2\ldots,S_q \in \S_q}$ replaced by  $\sum_{(S_1,\ldots,S_q) \in \Gamma}$. To see this, simply note that since each expectation in the sum is nonnegative we may assume that $a_{S_{q-1}}$ and $b_{S_{q}}$ are nonnegative, and therefore any upper bound on  $\sum_{S_1\in \S_1,S_2\in \S_2\ldots,S_q \in \S_q} [\cdots]$ is a valid upper bound on the sum $\sum_{(S_1,\ldots,S_q) \in \Gamma}[\cdots]$.}
\end{remark}

\section{Proof of Theorem~\ref{lemma:feature_product_tech}}\label{sec:comp_proof}
We now have all the ingredients to prove Theorem~\ref{lemma:feature_product_tech}, the main result of the paper. 
Given any unit vectors $b_1\in \R^{\S_1}$ and $b_2\in \R^{\S_{m+1}}$, we will show
\begin{align*}
   b_1\tran \E \brac{ M}  b_2\leq  O_d \left(d^{p_j}\right)\|w^{(1)}\|_\infty \|w^{(m+1)}\|_\infty,
\end{align*}
which directly implies the estimate in Eq.~\eqref{eqn:labeled_hard_mat}.

We begin with some notation. For any $j\in [m+1]$ and any set $k\in \S_j$, we let $a^j_k$ denote the column of the matrix $A_j$ indexed by $k$. For each integer $j$, we let $a^j_{i,k}$ denote the $i$'th entry of $a^j_k$. To simplify notation, we will use the symbols $a^j_{i,k}$ and $a^j_{k,i}$ interchangeably. 
Noting the equality $A_j A_j^\top=\sum_{k=1}^{|\S_j|} a^j_k(a^j_k)^\top $ for each $j\in [p+1]$, we may expand the pairwise products of matrices $A_jA_j^\top$ yielding:  
     \begin{align}
        &\mathbf{S}:=b_1^\top A_1^\top \Bigl(A_2 A_2\tran \Bigr) \cdots \Bigl(A_{m} A_{m}\tran \Bigr) A_{m+1} \, b_2 \notag\\
&~~~~=\sum_{\substack{i_1,i_2,\ldots,i_{m} \in[n],\\ k_1\in \S_{1},k_2\in \S_{2},\ldots,k_{m+1}\in \S_{{m+1}}}} (b_1)_{k_1}a^1_{k_1,i_1} a^2_{i_1,k_2}  a^2_{k_2,i_2} a^3_{i_2,k_3} \ldots a^{m}_{k_{m},i_{m}} a^{m+1}_{i_{m},k_{m+1}} (b_2)_{k_{m+1}}\notag\\
        &=\sum_{\substack{i_1,i_2,\ldots,i_{m} \in[n],\\  k_1\in \S_{1},k_2\in \S_{2},\ldots,k_{m+1}\in \S_{m+1}}}  (b_1)_{k_1} (b_2)_{k_{m+1}}\prod_{j=1}^{m} a_{k_j,i_j}^j a_{i_j,k_{j+1}}^{j+1}.\label{eqn:key_path_expr}
     \end{align}

\paragraph{Partition of indices.}Now, let $\Pi_{k}$ denote the set of all partitions of the set $[k]$. Thus a partition $\pi\in \Pi_{k}$ has the form $\pi=\{T_1,\ldots,T_q\}$ for some blocks $T_i\subset [k]$ that are nonempty, disjoint, and so that the  union $\cup_{j=1}^q T_j$ equals $[k]$. For any $r\in [k]$, we let $\pi(r)$ denote the unique index $i$ of the block $T_i$ satisfying $r\in T_i$.

Notice that the elements of the set 
\begin{equation}\label{eqn:prod_ints}
(i_1,\ldots,i_{m}) \in \underbrace{[n]\times\ldots \times [n]}_{m ~{\rm times}}
\end{equation}
can be grouped based on identifying the indecies $j$ and $k$ such that $i_j$ and $i_k$ are equal.
 In other words, the set  \eqref{eqn:prod_ints} can be stratified by partitions $\pi_1\in\Pi_{m}$ so that equalities $i_r=i_t$ hold if and only if $\pi_1(r)=\pi_1(t)$. Similarly, the features $\S_1\times\ldots \S_{m+1}$ can be stratified by partitions $\pi_2\in\Pi_{m+1}$ so that equalities $k_r=k_t$ hold if and only if $\pi_2(r)=\pi_2(t)$. Since there are $O_d(1)$ many such partitions, we may fix a partition $\pi_1=\{T_1,\ldots, T_q\}\in\Pi_{m}$ and $\pi_2=\{G_1,\ldots,G_s\}\in\Pi_{m+1}$ and only focus on summands in \eqref{eqn:key_path_expr} satisfying 
\begin{align*}
&i_r=i_t~\textrm{if and only if}~~\pi_1(r)=\pi_1(t),\qquad \forall r,t\in [m],\\
&k_r=k_t~ \textrm{if and only if}~~\pi_2(r)=\pi_2(t)\qquad \forall r,t\in [m+1].
\end{align*}
We will denote the sum in \eqref{eqn:key_path_expr} indexed by the partitions $\pi_1$ and $\pi_2$ as $\mathbf{S}_{\pi}$.

\paragraph{Feature collapse.}Next, we simplify $\mathbf{S}_\pi$ by ``refining'' the partitions $\pi_1$ and $\pi_2$ by ``collapsing'' certain adjacent terms which results in deletions of indices from the partitions $\pi_1$ and $\pi_2$. To motivate the deletion process, observe that the equality holds:
\begin{align}\label{eqn:collapse_1}
   a^r_{i_{r-1},k_r}\cdot a^r_{k_r,i_r}=(w^{(r)}_{k_r})^2\qquad \textrm{whenever}~~ \pi_1(r-1)=\pi_1(r). 
\end{align}
Similarly, we have
\begin{align}\label{eqn:collapse_2}
   a^r_{k_r,i_r}\cdot a^{r+1}_{i_{r},k_{r+1}}=(w^{(r)}_{k_r})(w^{(r+1)}_{k_{r}})\qquad \textrm{whenever}~~ \pi_2(r)=\pi_2(r+1), 
\end{align}
where we use the fact that $A_r$ and $A_{r+1}$ are indexed by the same features whenever $\pi_2(r)=\pi_2(r+1)$.

Suppose now that \eqref{eqn:collapse_1} holds for some index $r$. Then we may replace the product 
$a^r_{k_r,i_r}\cdot a^{r+1}_{i_{r},k_{r+1}}$ by the constant $(w^{(r)}_{k_r})^2$. Thus, this ``feature collapse'' 
removes $k_r$ from the product leaving only the constant $(w^{(r)}_{k_r})^2$ in the summand. Thus, we may remove $r$ from the block $G\in \pi_2$ containing $r$.
Moreover, after the collapse we will be left with the product 
$a^{r-1}_{k_{r-1},i_{r-1}}a^{r+1}_{i_{r},k_{r+1}}$. 
Since equality  $\pi_1(r-1)=\pi_1(r)$ holds and $i_{r-1}$ and $i_{r}$ only appear in this term, we may merge the indices $i_{r-1}$ and $i_r$ into into a single index, say $i_{r-1}$. %
Exactly parallel reasoning applies to indices satisfying \eqref{eqn:collapse_2}, leading to ``sample collapse''. We may now iteratively collapse sample indices $r\in [m]$ and feature indices $[m+1]$ until no further collapse is possible.

The collapsing procedure is best illustrated pictorially by identifying the sum \eqref{eqn:key_path_expr} with the diagram
$$b_{1}\to k_1\xrightarrow{1} i_1\xrightarrow{2} k_2\xrightarrow{2} i_2\to\ldots k_{m}\xrightarrow{m} i_{m} \xrightarrow{m+1} k_{m+1}\to b_{2},$$
which alternates between feature indices and samples and the labels $r$ above the arrows indicate the matrix $A_r$ corresponding to the product.
Pictorially, \eqref{eqn:collapse_1} means that in the setting $\pi_1(r-1)=\pi_1(r)$ we can collapse subpaths of the form
\begin{align}\label{diag:collapse_1}
   k_{r-1}\xrightarrow{r-1} \boxed{i_{r-1}\xrightarrow{r} k_{r}\xrightarrow{r} i_r}\xrightarrow{r+1} k_{r+1}  \qquad   \textrm{to a subpath} \qquad k_{r-1}\xrightarrow{r-1} i_{r-1}  \xrightarrow{r+1} k_{r+1}, 
\end{align}
at a multiplicative cost of $(w^{(r)}_{k_r})^2$. Note that although $k_r$ no longer appears in the path, we still need to sum $(w^{(r)}_{k_r})^2$ over $k_r$. Similarly, \eqref{eqn:collapse_2} means that in the setting $\pi_2(r)=\pi_2(r+1)$ we can collapse subpaths of the form 
\begin{align}\label{diag:collapse_2}
  i_{r-1}\xrightarrow{r} \boxed{k_{r}\xrightarrow{r} i_r\xrightarrow{r+1} k_{r+1}}\xrightarrow{r+1} i_{r+1}  \qquad   \textrm{to a subpath} \qquad i_{r-1}\xrightarrow{r+1} k_{r+1}  \xrightarrow{r+1} i_{r+1},  
\end{align}
at a multiplicative cost of $(w^{(r)}_{k_r})^2$.
\footnote{Note that we replace  $a_{i_{r+1},k_{r}}^{r}$ by $a_{i_{r+1},k_{r+1}}^{r+1}$ when merging indices $r$ and $r+1$, which leads to an extra multiplicative cost $w^{(r)}_{k_{r}} / w^{(r+1)}_{k_{r}} $ aside from the multiplicative cost in Eq.~(\ref{eqn:collapse_2}). Multiplying the two costs together yields  $(w^{(r)}_{k_r})^2$. }
The iterative collapsing procedure then amounts to iteratively collapsing subpaths in such diagrams. Notice that each type of collapse removes a single pair of sample and feature indices $(i_r,k_r)$ from the product. Note, howevever, that we still need to sum over these indices the multiplicative costs $(w^{(r)}_{k_r})^2$.

\paragraph{Subscripts relabeling.} We relabel the subscripts of $i$ and $k$ to ensure their contuguity after each collapse. This can be achieved by relabeling $k_{r'}$ and $i_{r'}$ to be $k_{r'-1}$ and $i_{r'-1}$ respectively for all $ r'>r$. Then the subpaths in \eqref{diag:collapse_1} and \eqref{diag:collapse_2} become 
\begin{align}
    k_{r-1}\xrightarrow{r-1} i_{r-1}  \xrightarrow{r} k_{r},~~ {\rm and}~~~i_{r-1}\xrightarrow{r} k_{r}  \xrightarrow{r} i_{r},
\end{align}
and we may iterate the collapsing procedure until no further collapse is possible.

We will need to track the indices that have been deleted during the collapsing procedure. To this end, we let the sets $\L_1$ and $\L_2$ consist of the indices $r$ that have been removed due to \eqref{diag:collapse_1} and \eqref{diag:collapse_2}, respectively, during the iterative collapse process (in the original labeling). We define $L := |\L_1| +|\L_2|$. The relabeling naturally gives a bijection from  the new relabeled subscripts to
the original subscripts:
 $$\sigma\colon  [p+1-L] \to  [m+1]\slash (\L_1 \sqcup \L_2)$$

In order to simplify notation, define the constant $\emp$ by the expression
$\emp=p-L$. Recall that all the subscripts $r$ have been permuted by $\sigma$. In particular, all remaining indices now appear in $\mathcal{I}_{+}:=[\emp+1]$. To simplify notation, let us now relabel $\L_1$ and $\L_2$ so that they lie in $\mathcal{I}_{-}:=\{m+2-L,\ldots, m+1\}$ in an arbitrary order. With this notation, $\mathbf{S}_{\pi}$ has the explicit form:

\begin{align}\label{eq:relabel_product}
 \mathbf{S}_{\pi}=  \sum_{\substack{i_1,i_2,\ldots,i_{m},\\  k_1,k_2,\ldots,k_{m+1}}} \left(\prod_{r\in \mathcal{I}_-} (w^{(r)}_{k_r})^2 \right)(b_1)_{k_1}a^1_{k_1,i_1} a^2_{i_1,k_2}  a^2_{k_2,i_2} a^3_{i_2,k_3} \ldots a^{\emp}_{k_{\emp},i_{\emp}} a^{\emp+1}_{i_{\emp},k_{\emp+1}} (b_2)_{k_{\emp+1}}. 
\end{align}
Note that the sum in \eqref{eq:relabel_product}  is still taken over the partitions $\pi_1$ and $\pi_2$, which forces equalities among some of the values in $\{i_r\}_r$ and in $\{k_r\}_r$. Note that if there exists a block $P\in\pi_1$ such that $P\cap [\emp]$ is a singleton, then the expectation of $\mathbf{S}_{\pi}$ is zero. Therefore without loss of generality we may suppose $|T_i\cap [\emp]|$ is either zero or at least two for all $i\in [q]$.
Note moreover that the expectation of the summands in \eqref{eq:relabel_product} is independent of the indices $i_1,i_2,\ldots,i_{m}$. Therefore, we may suppose for the rest of the proof that $i_1,i_2,\ldots,i_{m}$ are fixed, bound the expectation of the sum \eqref{eq:relabel_product} over 
 varying indices $k_1,k_2,\ldots,k_{m+1}$, and multiply the bound by $n^{q}$.

We now split $\pi_2$ based on whether the block contains any of the noncollapsed indices:
\begin{align*}
E_1&:=\{u~\vert~G_u \cap \mathcal{I}_+ \neq \emptyset\}\qquad \textrm{and}\qquad
E_2:=\{u~\vert~G_u \cap \mathcal{I}_+ = \emptyset\}.
\end{align*}
We correspondingly denote all the indices in $E_1$ and $E_2$ by $K_1$ and $K_2$:
\begin{align*}
    K_1 := \bigsqcup_{u\in E_1}G_u,\qquad K_2 := \bigsqcup_{u\in E_2} G_u.
\end{align*}

\begin{figure}[htb!]
    \centering
    \vspace{-3cm}
    \includegraphics[width=0.8\linewidth]{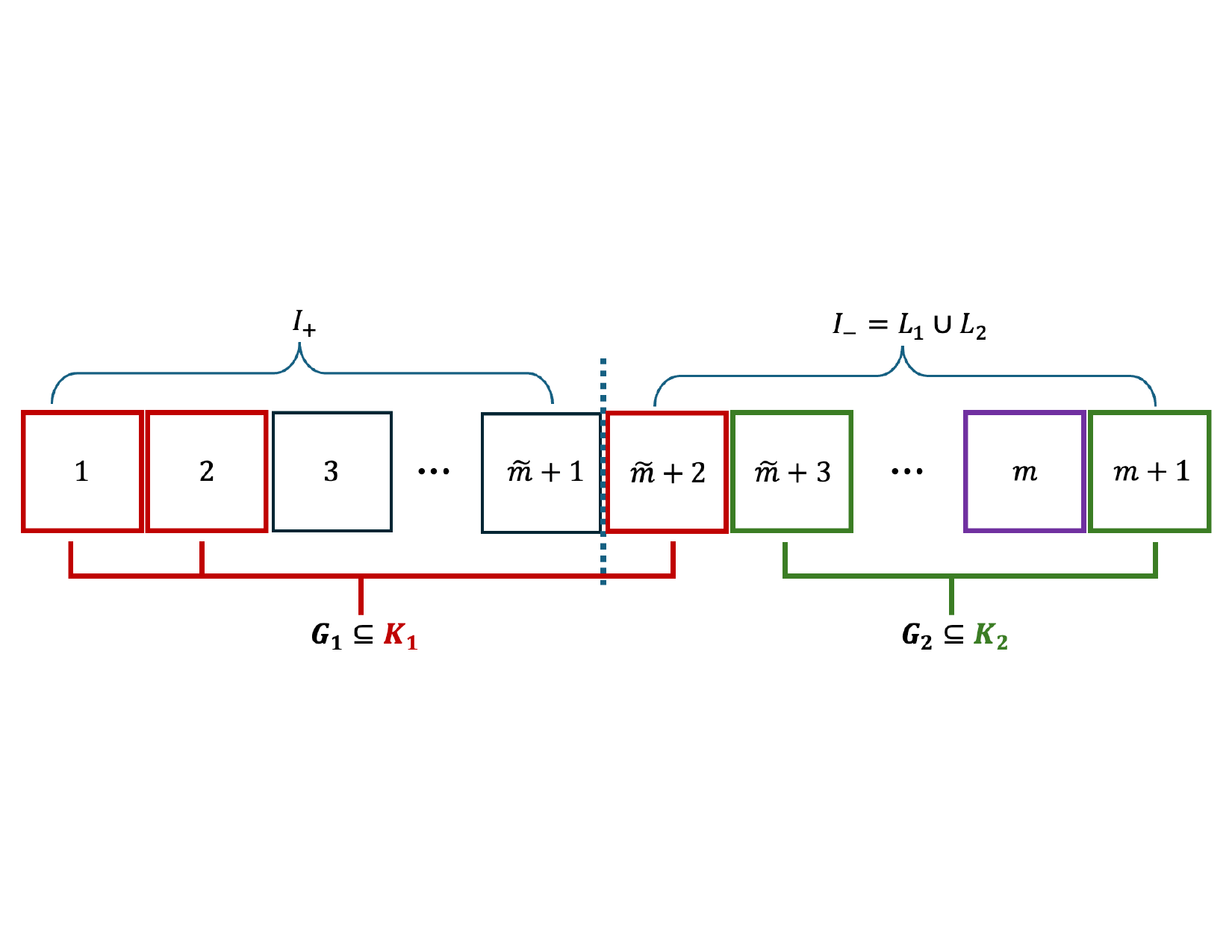}
    \vspace{-3cm}
    \caption{An illustration of $K$.}
    \label{fig:fig_1}
\end{figure}

Note that we have $K_1\cup K_2=[p+1]$ and moreover $\pi_2(k)$ and $\pi_2(k')$ are disjoint for any $k\in K_1$ and $k'\in K_2$.
Next, for each block index $u=1,\ldots, s$ of $\pi_2$ we define

\begin{align}\label{eqn:weights_feature}
    P_u &:= \prod_{r \in G_u \cap \mathcal{I}_- } (w^{(r)}_{k_r})^{\frac{2}{|G_u\cap \mathcal{I}_+|}}.
\end{align}
The powers on $(w^{(r)}_{k_r})^2$ arise because we will distribute $P_u$  uniformly across all elements in $G_u\cap \mathcal{I}_+$.
\begin{figure}[htb!]
    \centering
    \vspace{-3cm}
    \includegraphics[width=0.8\linewidth]{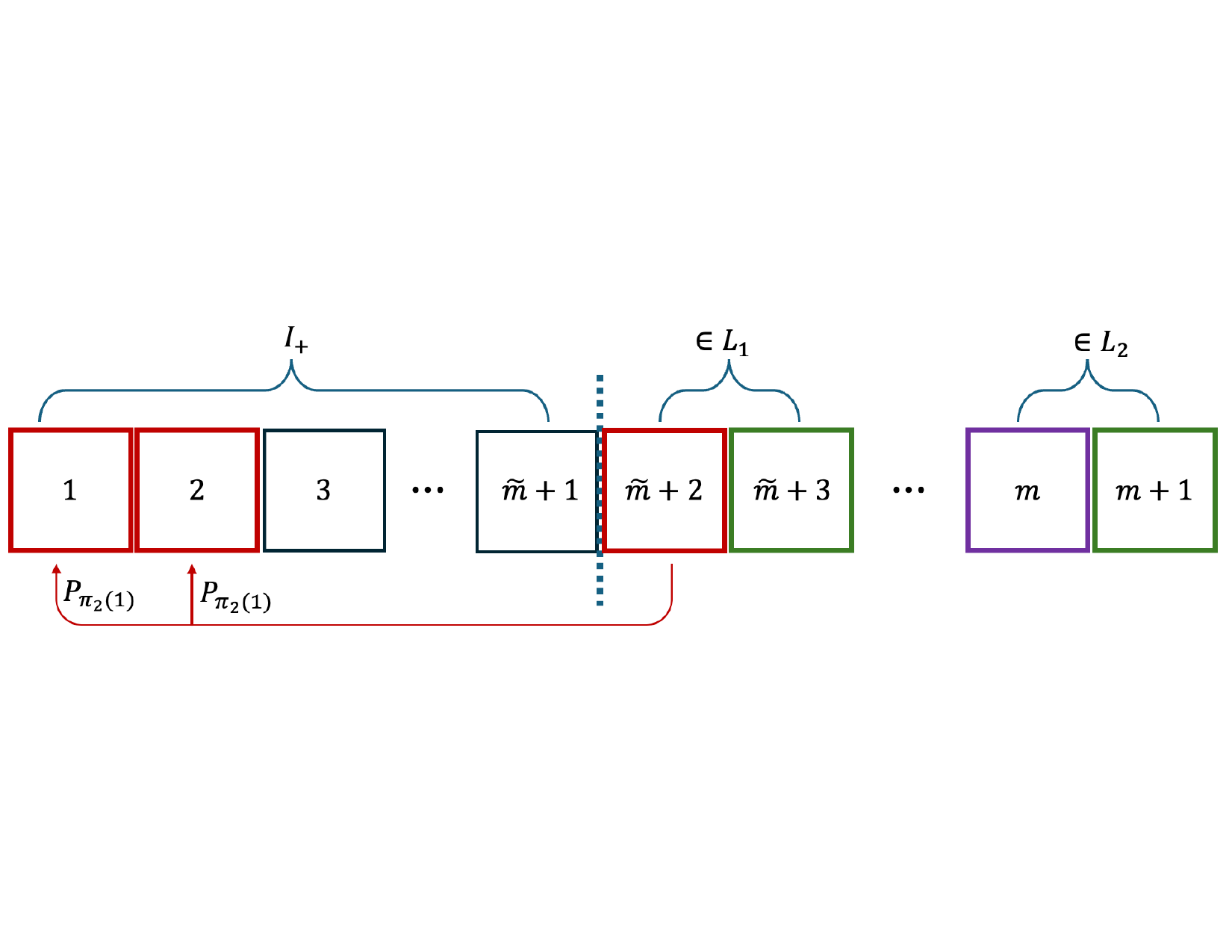}
    \vspace{-3cm}
    \caption{An illustration of weighted feature.}
    \label{fig:fig_2}
\end{figure}

Now we are ready to bound $\mathbf{S}_{\pi}$. In the following, we assume that not all features $k_r$ have collapsed to a single feature $k_1$. We will discuss this simple case at the end of the proof.

Let's first fix $i_1,i_2,\ldots,i_{m}$ and $k_1,k_2,\ldots,k_{m+1}$ and decompose the inner product
\begin{align*}
\prod_{r\in \mathcal{I}_-} (w^{(r)}_{k_r})^2=\left(\prod_{r  \in K_1 \cap \mathcal{I}_-} (w^{(r)}_{k_r})^2\right)\left(\prod_{r \in K_2 \cap \mathcal{I}_-} (w^{(r)}_{k_r})^2\right)
\end{align*}
Note for any $r\in K_2\cap \mathcal{I}_{-}$ the block $G_{\pi_2(r)}$ is contained fully in $\mathcal{I}_{-}$. Therefore, all the indices $k_r$ in this product are distinct from those appearing as a subscript on $a$'s. Using the fact that  $\pi_2(k)$ and $\pi_2(k')$ are disjoint for any $k\in K_1$ and $k'\in K_2$, we may sum over $k$ by summing independently over $K_1$ and $K_2$ thereby yielding:
\begin{align}
&\sum_{k_1,k_2,\ldots,k_{m+1}} \left(\prod_{r\in \mathcal{I}_-} (w^{(r)}_{k_r})^2 \right)\cdot \left[\cdots\right]\notag\\
&=\sum_{k_r:\, r\in K_1}\sum_{k_r:\, r\in K_2} \left(\prod_{r  \in K_1 \cap \mathcal{I}_-} (w^{(r)}_{k_r})^2\right)\left(\prod_{r \in K_2 \cap \mathcal{I}_-} (w^{(r)}_{k_r})^2\right) \cdot  \left[\cdots\right]\notag \\
&=\left(\sum_{k_r:\, r\in K_1} \left(\prod_{r \in K_1 \cap \mathcal{I}_-} (w^{(r)}_{k_r})^2\right) \left[\cdots\right]\right)\cdot\left(\sum_{k_r:\, r\in K_2} \left(\prod_{r  \in K_2 \cap \mathcal{I}_-}\right) (w^{(r)}_{k_r})^2\right)\label{eqn:ewqn_messproduct}
\end{align}
where $[\cdots]$ denotes the product $b_{c_1,k_1}a^1_{k_1,i_1}  \cdots a^{\emp+1}_{i_{\emp},k_{\emp+1}} b_{k_{\emp+1},c_{\emp+1}}$. Next, we will need the following claim.

\begin{claim}\label{claim:g_u_l_1}
For any $u\in E_2$, the intersection $G_u\cap \mathcal{L}_1$ is nonempty. 
\end{claim}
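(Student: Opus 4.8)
The plan is to argue by tracking, throughout the iterative collapse, the number of surviving copies of the single feature value carried by the block $G_u$. Fix $u\in E_2$ and let $k^\star$ denote the common value of the feature index $k_r$ over all positions $r\in G_u$ (recall that a block of $\pi_2$ consists exactly of those positions whose feature indices are forced equal). Since $u\in E_2$ means $G_u\cap\mathcal{I}_+=\emptyset$, every position of $G_u$ is eventually deleted during the collapse, so at the end \emph{no} copy of $k^\star$ remains in the path.

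The heart of the argument is an asymmetry between the two collapse rules. A type-1 collapse \eqref{diag:collapse_1} fires under the \emph{sample} condition $\pi_1(r-1)=\pi_1(r)$, using the identity \eqref{eqn:collapse_1}, and simply deletes the lone feature $k_r$; when $k_r=k^\star$ this removes one copy of $k^\star$ while requiring only that at least one copy be present. A type-2 collapse \eqref{diag:collapse_2}, by contrast, fires under the \emph{feature} condition $\pi_2(r)=\pi_2(r+1)$, i.e.\ via \eqref{eqn:collapse_2} it merges two path-adjacent and equal features $k_r=k_{r+1}$ into one. Hence any type-2 step that deletes a copy of $k^\star$ can fire only when at least two copies of $k^\star$ are simultaneously present in the current path.

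I would then introduce the counter $c$ equal to the number of copies of $k^\star$ remaining in the evolving path. Initially $c=|G_u|\ge 1$, and because $u\in E_2$ the final value is $c=0$; moreover each deletion of a $k^\star$-copy lowers $c$ by exactly one. Consequently there is a well-defined final step carrying $c$ from $1$ to $0$. By the observation above a type-2 step needs $c\ge 2$ to fire, so this last step cannot be of type 2; it must be a type-1 collapse at some position $r$ with $k_r=k^\star$, that is $r\in G_u\cap\mathcal{L}_1$. This produces the desired element and establishes $G_u\cap\mathcal{L}_1\neq\emptyset$.

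I expect the only delicate point to be bookkeeping rather than ideas: one must verify that ``copy of $k^\star$'', ``deletion'', and membership in $\mathcal{L}_1$ versus $\mathcal{L}_2$ are tracked consistently across the relabeling $\sigma$ (a bijection, hence preserving nonemptiness of the intersection), and that type-2 merges—which may create \emph{new} adjacencies between remaining $k^\star$-copies—still respect the ``decrease $c$ by exactly one'' invariant. Once the invariant ``type 2 requires $c\ge 2$ while type 1 requires only $c\ge 1$'' is read off directly from \eqref{eqn:collapse_1} and \eqref{eqn:collapse_2}, the $1\to 0$ transition argument finishes the proof immediately.
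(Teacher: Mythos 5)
Your proof is correct and takes essentially the same approach as the paper: both arguments rest on the observation that a type-2 collapse \eqref{diag:collapse_2} can only fire when two positions of the same block $G_u$ are simultaneously present (it merges one into the other), so $G_u$ cannot be emptied by type-2 collapses alone and the final removal must be a type-1 collapse, producing an element of $G_u\cap\mathcal{L}_1$. The paper packages this extremal step by taking the last type-2 removal in $G_u$ and noting that its merge partner must lie in $\mathcal{I}_+\cup\mathcal{L}_1$, hence in $\mathcal{L}_1$; your ``last copy'' counter is the same idea viewed from the final step instead.
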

\begin{proof}
  Given $G_u$, we consider the last index $r\in \L_2 \cap G_u$ that was removed due to condition \eqref{eqn:collapse_2}.  There exists $r' \in \I^+\cup \L_1$ such that $\pi_2(r') = \pi_2(r)$ by definition. 
   Since each $G_u$ with $u\in E_2$ consists of subscripts from $\L_1 \cup \L_2$, we have $r'\in \L_1$ hence completing the proof. 
\end{proof}

Taking into account this claim, we see that for each $r\in K_2$, the set $G_{\pi_2(r)}$ intersects $\mathcal{L}_1$. Therefore, we may replace  $\sum_{k_r:\, r\in K_2}$ in \eqref{eqn:ewqn_messproduct} with $\sum_{k_r:\, r\in K_2\cap \mathcal{L}_1}$.
Splitting $K_2 \cap \I_-$ into $K_2 \cap \L_1$ and $K_2\cap \L_2$, the second summation in Eq.~\eqref{eqn:ewqn_messproduct} can be written as
\begin{align}\label{eqn:weight_assign_1}
    \sum_{k_r:\, r\in K_2} \prod_{r  \in K_2 \cap \mathcal{I}_-} (w^{(r)}_{k_r})^2 = \sum_{k_r:\, r\in K_2\cap \L_1} \round{\prod_{r  \in K_2 \cap \L_1} (w^{(r)}_{k_r})^2}\round{\prod_{r  \in K_2 \cap \L_2} (w^{(r)}_{k_r})^2}.
\end{align}
Now we consider the first summation in Eq.~\eqref{eqn:ewqn_messproduct}. Note that the following holds
\begin{align*}
    K_1 \cap \I_- = \bigsqcup_{u\in E_1}(G_u \cap \I_-).
\end{align*}
For each block $G_u$ with $u\in E_1$, we distribute the weight 
    $\prod_{r \in G_u \cap \I_-} (w^{(r)}_{k_r})^2$ equally to all $(w^{(r')}_{k_{r'}})^2$ with $r' \in G_u \cup \I_-$.
We thus obtain
\begin{align}\label{eqn:weight_assign_2}
    &~~~\sum_{k_r:\, r\in K_1} \round{\prod_{r  \in K_1 \cap \I_-} (w^{(r)}_{k_r})^2}(b_{1})_{k_1}a^1_{k_1,i_1} a^2_{i_1,k_2}  \ldots  a^{\emp+1}_{i_{\emp},k_{\emp+1}} (b_{2})_{k_{\emp+1}} \notag\\
    &=\sum_{k_r:\, r\in K_1}\round{(b_1)_{k_1}P_{\pi_2(1)}^{\frac{1}{2}}} \round{ a^1_{k_1,i_1}P_{\pi_2(1)}^{\frac{1}{2}}} \round{a^2_{i_1,k_2} P_{\pi_2(2)}^{\frac{1}{2}}} \round{ a^2_{k_2,i_2} P_{\pi_2(2)}^{\frac{1}{2}} } \ldots \round{a^{\emp+1}_{i_{\emp},k_{\emp+1}} P_{\pi_2(\emp+1)}^{\frac{1}{2}}} \round{(b_{2})_{k_{\emp+1}}P_{\pi_2(\emp+1)}^{\frac{1}{2}}}.
\end{align}
Plugging Eq.~\eqref{eqn:weight_assign_1} and \eqref{eqn:weight_assign_2} into Eq.~\eqref{eqn:ewqn_messproduct} yields:
\begin{align*}
   & \underbrace{\sum_{k_r:\, r\in K_1}\round{(b_1)_{k_1} P_{\pi_2(1)}^{\frac{1}{2}}} \round{ a^1_{k_1,i_1}P_{\pi_2(1)}^{\frac{1}{2}}} \round{a^2_{i_1,k_2} P_{\pi_2(2)}^{\frac{1}{2}}} \round{ a^2_{k_2,i_2} P_{\pi_2(2)}^{\frac{1}{2}} } \ldots \round{a^{\emp+1}_{i_{\emp},k_{\emp+1}} P_{\pi_2(\emp+1)}^{\frac{1}{2}}} \round{(b_2)_{k_{\emp+1}}P_{\pi_2(\emp+1)}^{\frac{1}{2}}}}_{:=\mathbf{S}_c}.\\
   &~~~\cdot\underbrace{\sum_{k_r:\, r\in K_2\cap \L_1} \round{\prod_{r  \in K_2 \cap \L_1} (w^{(r)}_{k_r})^2}\round{\prod_{r  \in K_2 \cap \L_2} (w^{(r)}_{k_r})^2}}_{:=\mathbf{S}_b}.
\end{align*}
We now bound $\E[\mathbf{S}_c]$  and $\mathbf{S}_b$ beginning with the former. Note that $ \mathbf{S}_c = \sum_{k_r:\, r\in K_1} [\cdots]$ only depends on indices $k_r$ with $r\in \I^+$. We explicitly write out all the feature indices $k_1,\ldots,k_{\emp+1}$ without applying partition, but we group the features based on partition of sample indices, i.e., $\pi_1 = (T_1,\ldots,T_{q})$. We assign new sample indices to each block $(i_1,\ldots,i_q)$. Then we can obtain
\begin{align}
    \mathbf{S}_c = \sum_{\substack{
k_r\in \S_{r},\\
r \in [\emp+1]
}}
(b_1)_{k_1}(b_2)_{k_{\emp+1}}P_{\pi_2(1)}^{\frac{1}{2}} P_{\pi_2(\emp+1)}^{\frac{1}{2}}  
\;
\prod_{j=1}^{q}
\;
\prod_{r\,\in\,T_j\cap [\emp]}
\;
\round{
a^{\,r}_{\;i_j,\,k_{r}}a^{\,r+1}_{\;i_j,\,k_{r+1}} \cdot P^{\frac{1}{2}}_{\pi_2(r)} P^{\frac{1}{2}}_{\pi_2(r+1)}
}.
\end{align}

\paragraph{Iterative summation over feature indices.} Next we iteratively take the summation over the feature indices i.e., $\sum_{\substack{
k_1,\dots,k_{\emp+1}
}}$  to the right side of $\prod_{j=1}^{q}$. To this end, observe the some blocks $T_i$ might have an empty intersection with $[\emp]$. Therefore without loss of generality we suppose that $T_1\cap \I_+,\ldots, T_e\cap \I_+$ are nonempty and cover $\I_+$ for some $e\in [q]$. We now define a sequence of disjoint sets of feature indices $R_1,...,R_e$ such that $R_1\sqcup R_2\sqcup \ldots \sqcup R_e = [\emp+1]$ holds. We iteratively remove $T_1$ from the graph along with all connected edges (ignoring orientation),  then $T_2$, and so on and so forth. 
When deletion of $T_j$ causes possibly new blocks $Q\cap \I_+$ with $Q\in\pi_2$ to become disconnected from the rest of the graph, we let $R_j$ consist of all the elements in these blocks $Q\cap \I_+$.  
Define now the accumulated set features $\overline R_j:=\bigcup_{i=1}^j R_i$.

As a concrete illustration of the definitions, consider a subpath:
\begin{align*}
    \to \underset{\underset{P_{\pi_2(3)}}{\rotatebox{90}{$\sim$}}}{k_3}\xrightarrow{3} i_4\xrightarrow{4} \underset{\underset{P_{\pi_2(4)}}{\rotatebox{90}{$\sim$}}}{k_4}\xrightarrow{4} i_5  \xrightarrow{5} \underset{\underset{P_{\pi_2(5)}}{\rotatebox{90}{$\sim$}}}{k_{5}}\xrightarrow{5} i_{6} \xrightarrow{6} \underset{\underset{P_{\pi_2(6)}}{\rotatebox{90}{$\sim$}}}{k_{6}}.
\end{align*}
Given $T_s = \{4,6\}$,  we delete the index $i_4$ and $i_6$ and the connected edges;  then the diagram becomes
\begin{align*}
    \to \underset{\underset{P_{\pi_2(3)}}{\rotatebox{90}{$\sim$}}}{k_3}~~~~~~~~~~~~\underset{\underset{P_{\pi_2(4)}}{\rotatebox{90}{$\sim$}}}{k_4}\xrightarrow{4} i_5  \xrightarrow{5} \underset{\underset{P_{\pi_2(5)}}{\rotatebox{90}{$\sim$}}}{k_{5}}~~~~~~~~~~~~
    \underset{\underset{P_{\pi_2(6)}}{\rotatebox{90}{$\sim$}}}{k_{6}}.
\end{align*}
After the deletion, node $k_6$ becomes isolated. Suppose $G_{\pi_2(6)} = \{6\}$ is a singleton. Then the block $G_{\pi_2(6)}$ is disconnected from the rest of the graph, therefore we set $R_s=\{6\}$.

To simplify notation, define the product along the edge $k_r\to i_j\to k_{r+1}$ as follows:
\begin{align}
    H_{j,r} := a^{\,r}_{\;\,k_{r},i_{j}}a^{\,r+1}_{\;i_{j},\,k_{r+1}} \cdot P^{\frac{1}{2}}_{\pi_2(r)} P^{\frac{1}{2}}_{\pi_2(r+1)}.
\end{align}
Note the  $H_{j,r}$ depends on $i_j$, $k_r$, and $k_{r+1}$.
For $s = 1,2,\ldots,e$, we define the quantity
\begin{align}\label{eq:def_v_s}
 \displaystyle   V_s = \begin{cases}
      \displaystyle  \sum_{k_t: \displaystyle t\in \overline{R}_s} \round{(b_1)_{k_{1}} P_{\pi_2(1)}^{\frac{1}{2}}}^{\mathbbm{1}\{1\in \overline{R}_s\}}  \round{(b_2)_{k_{\emp+1}} P_{\pi_2(\emp+1)}^{\frac{1}{2}}}^{\mathbbm{1}\{\emp+1 \in \overline{R}_s\}}  \prod_{j=1}^s  \E\brac{
\displaystyle\prod_{r\,\in\, T_j \cap [\emp]}
\;
H_{j,r},}~~&{\rm if}~ \displaystyle \overline{R}_s\neq \emptyset;\\
 \prod_{j=1}^s  \E\brac{
\displaystyle\prod_{r\,\in\, T_j \cap [\emp]}
\;
H_{j,r},}, &{\rm otherwise}.
    \end{cases} 
\end{align}
Note that the value $V_s$ depends on all $k_t$ with $t\in [\emp+1]\setminus \overline{R}_s$ since the sum in \eqref{eq:def_v_s} is taken over $t\in \overline{R}_s$. 
Clearly, equality $\E[\mathbf{S}_c] =  V_e$ holds and therefore it suffices now to bound $V_e$. We do so by appealing to the following claim, which shows that $V_s$ admits a recursive relation.  To simplify notation, we set $\hat b_1=b_1\cdot P_{\pi_2(1)}^{1/2}$ and similarly $\hat b_2=b_1\cdot P_{\pi_2(\emp+1)}^{1/2}$.
\begin{claim}
The following estimates hold for $s = 2,\ldots,e$:
\begin{align}\label{eqn:recurs}
        V_s = \begin{cases}
       \sum_{k_t: t\in R_s} (\hat b_1)_{k_1}^{\mathbbm{1}\{1\in {R}_s\}}(\hat b_2)_{k_{\emp+1}}^{\mathbbm{1}\{\emp+1\in {R}_s\}} V_{s-1}\cdot  \E\brac{
\prod_{r\,\in\,T_s\cap[\emp]}
\;
H_{s,r}},~~&{\rm if}~ R_s\neq \emptyset;\\
 V_{s-1} \cdot \E\brac{
\prod_{r\,\in\,T_s\cap[\emp]}
\;
H_{s,r}}, &{\rm otherwise}.
    \end{cases} 
\end{align}
\end{claim}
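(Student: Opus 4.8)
The plan is to derive the recursion \eqref{eqn:recurs} directly from the definition \eqref{eq:def_v_s} of $V_s$ by peeling off the contribution of the single block $T_s$. The starting point is that the accumulated feature set obeys the disjoint decomposition $\overline{R}_s = \overline{R}_{s-1}\sqcup R_s$, so that $\sum_{k_t:\,t\in\overline{R}_s}$ factors as the nested sum $\sum_{k_t:\,t\in R_s}\sum_{k_t:\,t\in\overline{R}_{s-1}}$. Simultaneously, the product $\prod_{j=1}^s \E\brac{\prod_{r\in T_j\cap[\emp]} H_{j,r}}$ splits as $\prod_{j=1}^{s-1}\E\brac{\prod_{r\in T_j\cap[\emp]}H_{j,r}}$ times the single factor $\E\brac{\prod_{r\in T_s\cap[\emp]}H_{s,r}}$. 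The objective is then to move the inner summation over $\overline{R}_{s-1}$, together with the first $s-1$ factors and the matching boundary weights, past the $j=s$ factor, so that the bracketed quantity is recognized as $V_{s-1}$.

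Two bookkeeping points make this separation clean. First, the boundary weights $\round{(b_1)_{k_1}P_{\pi_2(1)}^{1/2}}^{\mathbbm{1}\{1\in\overline{R}_s\}}$ and $\round{(b_2)_{k_{\emp+1}}P_{\pi_2(\emp+1)}^{1/2}}^{\mathbbm{1}\{\emp+1\in\overline{R}_s\}}$ split multiplicatively, because disjointness of $\overline{R}_{s-1}$ and $R_s$ gives $\mathbbm{1}\{1\in\overline{R}_s\}=\mathbbm{1}\{1\in\overline{R}_{s-1}\}+\mathbbm{1}\{1\in R_s\}$, and likewise for $\emp+1$; in the abbreviations $\hat b_1,\hat b_2$, this yields the outer factor $(\hat b_1)_{k_1}^{\mathbbm{1}\{1\in R_s\}}(\hat b_2)_{k_{\emp+1}}^{\mathbbm{1}\{\emp+1\in R_s\}}$ and the corresponding $\overline{R}_{s-1}$-indexed weights inside. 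Second, and this is the crux, the factor $\E\brac{\prod_{r\in T_s\cap[\emp]}H_{s,r}}$ depends only on the feature indices $k_r,k_{r+1}$ with $r\in T_s\cap[\emp]$, that is, only on features incident to the sample block $T_s$ in the path diagram, so it may be pulled out of the inner sum over $\overline{R}_{s-1}$.

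Justifying this pull-out is the main obstacle, and it is precisely where the deletion process is used. By construction, the feature blocks collected into $R_1,\ldots,R_{s-1}$ are those that became disconnected from the remaining graph upon deleting $T_1,\ldots,T_{s-1}$; hence no feature index in $\overline{R}_{s-1}$ is incident to $T_s$. Consequently the $j=s$ expectation involves no $k_t$ with $t\in\overline{R}_{s-1}$ and is constant with respect to the inner summation, while the features in $R_s$ that it does touch are exactly those newly disconnected by deleting $T_s$ and are therefore summed in the outer sum. After the pull-out, the inner sum, weights, and first $s-1$ expectation factors assemble into $V_{s-1}$, giving the first case of \eqref{eqn:recurs}. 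The degenerate case $R_s=\emptyset$ is immediate: here $\overline{R}_s=\overline{R}_{s-1}$, the outer sum is vacuous, and $V_s=V_{s-1}\cdot\E\brac{\prod_{r\in T_s\cap[\emp]}H_{s,r}}$, which is the second case. The only further care needed is when $\overline{R}_{s-1}=\emptyset$, where one invokes the second branch of \eqref{eq:def_v_s} for $V_{s-1}$; this changes which branch of the definition is referenced but not the algebra.
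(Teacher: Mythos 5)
Your proposal is correct and follows essentially the same route as the paper's proof: the same decomposition $\overline{R}_s=\overline{R}_{s-1}\sqcup R_s$ of the summation, the same multiplicative splitting of the boundary weights $\hat b_1,\hat b_2$ via $\mathbbm{1}\{1\in\overline{R}_s\}=\mathbbm{1}\{1\in\overline{R}_{s-1}\}+\mathbbm{1}\{1\in R_s\}$, and the same key pull-out of $\E\brac{\prod_{r\in T_s\cap[\emp]}H_{s,r}}$ from the inner sum because it involves no $k_t$ with $t\in\overline{R}_{s-1}$. If anything, your justification of the pull-out via the deletion process (features in $\overline{R}_{s-1}$ are, by construction, not incident to $T_s$) is slightly more explicit than the paper's appeal to independence, and your handling of the degenerate branches $R_s=\emptyset$ and $\overline{R}_{s-1}=\emptyset$ is a bit more careful, but these are refinements of the same argument.
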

\begin{proof}
We present the proof for $ R_s \neq \emptyset$. The proof for the other case follows similarly.
Now an elementary argument shows that the equation
\begin{align}
&\sum_{k_t: t\in \overline{R}_s} (\hat b_1)_{k_1}^{\mathbbm{1}\{1\in \overline{R}_s\}}(\hat b_2)_{k_{\emp+1}}^{\mathbbm{1}\{\emp+1\in \overline{R}_s\}} [\cdots]\notag\\
&=\sum_{k_t: t\in R_s}\sum_{k_t: t\in \overline{R}_{s-1}}  (\hat b_1)_{k_1}^{\mathbbm{1}\{1\in \overline{R}_s\}}(\hat b_2)_{k_{\emp+1}}^{\mathbbm{1}\{\emp+1\in \overline{R}_s\}} [\cdots]\notag\\
&=\sum_{k_t: t\in R_s}(\hat b_1)_{k_1}^{\mathbbm{1}\{1\in {R}_{s}\}}(\hat b_2)_{k_{\emp+1}}^{\mathbbm{1}\{\emp+1\in {R}_{s}\}} \sum_{k_t: t\in \overline{R}_{s-1}}  (\hat b_1)_{k_1}^{\mathbbm{1}\{1\in \overline{R}_{s-1}\}}(\hat b_2)_{k_{\emp+1}}^{\mathbbm{1}\{\emp+1\in \overline{R}_{s-1}\}} [\cdots],\label{eqn:last_eqn_brak}
\end{align}
holds for any quantity $[\cdots]$. Plugging in $[\cdots]=\prod_{j=1}^s\E\brac{
\prod_{r\,\in\, T_j\cap [\emp]}
\;
H_{j,r}}$
on the left yields $V_s$. On the right side, we may further simplify the inner sum as follows.
Since the features in $T_s$ and $T_1,\ldots,T_{s-1}$ are independent, we can split the expectation and obtain
\begin{align*}
  &\sum_{k_t: t\in \overline{R}_{s-1}}  (\hat b_1)_{k_1}^{\mathbbm{1}\{1\in \overline{R}_{s-1}\}}(\hat b_2)_{k_{\emp+1}}^{\mathbbm{1}\{\emp+1\in \overline{R}_{s-1}\}} \prod_{j=1}^s\E\brac{
\prod_{r\,\in\, T_j\cap [\emp]}
\;
H_{j,r}}\\
&= \sum_{k_t:t\in  \overline{R}_{s-1}} (\hat b_1)_{k_1}^{\mathbbm{1}\{1\in \overline{R}_{s-1}\}}(\hat b_2)_{k_{\emp+1}}^{\mathbbm{1}\{\emp+1\in \overline{R}_{s-1}\}}\left(\prod_{j=1}^{s-1}\E\brac{\prod_{r\in T_j \cap [\emp]} H_{j,r}} \cdot \E\brac{\prod_{r\in T_s\cap[\emp]} H_{s,r}}\right) \\
&= \underbrace{\left(\sum_{k_t:t\in  \overline{R}_{s-1}} (\hat b_1)_{k_1}^{\mathbbm{1}\{1\in \overline{R}_{s-1}\}}(\hat b_2)_{k_{\emp+1}}^{\mathbbm{1}\{\emp+1\in \overline{R}_{s-1}\}}\prod_{j=1}^{s-1}\E\brac{\prod_{r\in T_j \cap [\emp]} H_{j,r}} \cdot \right)}_{=V_{s-1}}\E\brac{\prod_{r\in T_s\cap[\emp]} H_{s,r}}.
\end{align*}
where the last inequality follows from the key fact that 
$H_{s,r}$ with $r\in T_s\cap [\emp]$ are independent of $k_t$ with $t\in \overline{R}_{s-1}$ and therefore $\E\brac{\prod_{r\in T_s\cap[\emp]} H_{s,r}}$ can be pulled out of the inner sum. Next, if $1\notin \overline{R}_{s}$ and $\emp+1\notin \overline{R}_{s}$, then $Q=V_{s-1}$. 
Thus plugging this expression back into \eqref{eqn:last_eqn_brak} completes the proof of \eqref{eqn:recurs}.
\end{proof}

\paragraph{Bounding $V_s$.} 

We now inductively bound $V_s$ as follows. Suppose for the moment that $R_s\neq \emptyset$ and consider the expression in \eqref{eqn:recurs}. Applying absolute values to both sides and using the triangle inequality gives:
\begin{align}\label{eqn:key_bound_neededhere}
    |V_s|\leq \sum_{k_t: t\in R_s} (|\hat b_1|)_{k_1}^{\mathbbm{1}\{1\in {R}_s\}}(|\hat b_2|)_{k_{\emp+1}}^{\mathbbm{1}\{\emp+1\in {R}_s\}} |V_{s-1}|\cdot  \E\brac{
\prod_{r\,\in\,T_s\cap[\emp]}
\;
a^{\,r}_{\;\,k_{r},i_{s}}a^{\,r+1}_{\;i_{s},\,k_{r+1}} \cdot P^{\frac{1}{2}}_{\pi_2(r)} P^{\frac{1}{2}}_{\pi_2(r+1)}}.
\end{align}
Notice that the product is taken over the same sample index $i_s$ but different feature indices $k_r$. Consequently, the product coincides with a nonngetive multiple of a Fourier-Walsh monomial on the hypercube and therefore its expectation is nonnegative. Replacing $V_{s-1}$ by $\displaystyle\max_{k_t:t\in \I_+ }|V_{s-1}|$ clearly yields a valid upper-bound on  $|V_s|$.

Now, we aim to apply Proposition~\ref{prop:fourier_basis_product_card_2} along with Remark~\ref{rem:needed_remark}. 
Observe that due to the collapsing procedure, crucially each index $k_r$ can appear at most once in the product in \eqref{eqn:key_bound_neededhere}.
  Thus, we may treat the product of features that do not depend on any $k_t\in R_s$ as $x^{S^*}$, and each $a_{i_j,k_r}$ with $k_r\in R_s$ as $x^{S_t}$.  Since each summand in Eq.~\eqref{eq:card_bound_2} is non-negative, for each $S_t$ with $t\in[q]$, we can multiply $x^{S_t}$ by a positive factor, then the product of these factors will appear in the upper bound.
Therefore, Proposition~\ref{prop:fourier_basis_product_card_2} directly yields the bound
\begin{align*}
        |V_{s}| &= \round{\max_{k_t:t\in \I_+ }|V_{s-1}|}\cdot O_d(d^{\sum_{t \in R_s} p_{t}/2}) \cdot \round{\norm{b_1}_2 \norm{w^{(1)}}_\infty  P_{\pi_2(1)}^{\frac{1}{2}}d^{-p_1/2}}^{\mathbbm{1}\{1\in {R}_s\}} \\
        &\cdot \round{\norm{b_2}_2 \norm{w^{\emp+1}}_\infty P_{\pi_2(\emp+1)}^{\frac{1}{2}}d^{-p_{\emp+1}/2}}^{\mathbbm{1}\{\emp+1 \in {R}_s\}} 
\cdot\round{\prod_{r\,\in\,T_s}\norm{w^{{r}}}_\infty\norm{w^{{r+1}}}_\infty P^{\frac{1}{2}}_{\pi_2(r)}P^{\frac{1}{2}}_{\pi_2(r+1)}},
\end{align*}
where we use the fact that each $a^r_{i,k_r}$ is scaled with a weight bounded by $\norm{w_r}_\infty$.

Iterating the bound on $|V_s|$, we conclude
\begin{align}\label{eq:e_s_c_final}
\E[\mathbf{S}_c]=   O_d(d^{\sum_{t=2}^{\emp} p_{t}/2})\cdot  \norm{b_1}_2 \norm{b_2}_2 \norm{w^{(1)}}_\infty \norm{w^{(\emp+1)}}_\infty  \prod_{j=2}^{\emp}\norm{w^{(j)}}_\infty^2 \cdot 
\prod_{\substack{r \in K_1 \cap \I_-} } \norm{w^{(r)}}_\infty^2,
\end{align}
where  $\prod_{\substack{r \in K_1 \cap \I_-} } \norm{w^{(r)}}_\infty^2$ comes from the distributed weights $P$.

Now we bound $\mathbf{S}_b$ as follows:
\begin{align}\label{eq:s_b_final}
    \mathbf{S}_b &=
    \sum_{\substack{k_r: r\in K_2\cap \L_1}} \prod_{r\in K_2\cap \L_1} (w^{(r)}_{k_r})^2 \prod_{r\in K_2 \cap \L_2} (w^{(r)}_{k_r})^2\notag\\
    &\leq \prod_{r \in K_2 \cap \L_2} \norm{w^{(r)}}_\infty^2  \sum_{\substack{k_r: r\in K_2\cap \L_1}} \prod_{r\in K_2 \cap \L_1} (w^{(r)}_{k_r})^2 \notag\\
    &\leq  \prod_{r \in K_2 \cap \L_2} \norm{w^{(r)}}_\infty^2 \cdot \prod_{r\in K_2\cap \L_1} \norm{w^{(r)}}^2_\infty \cdot  O_d(d^{\sum_{t\in \L_1\cap K_2 }p_t}).
\end{align}
We now split $\prod_{\substack{r \in K_1 \cap \I_-} } \norm{w^{(r)}}_\infty^2$ into the product of $\prod_{\substack{r \in K_1 \cap \L_1} } \norm{w^{(r)}}_\infty^2$ and $\prod_{\substack{r \in K_1 \cap \L_2} } \norm{w^{(r)}}_\infty^2$ in Eq.~\eqref{eq:e_s_c_final}. 
Therefore, combining Eq.~\eqref{eq:e_s_c_final} and \eqref{eq:s_b_final} yields:
\begin{equation}\label{eq:fin_boundcb}
    \E[\mathbf{S}_c] \mathbf{S}_b \leq O_d(d^{\sum_{t=2}^{\emp} p_{t}/2})\cdot  \norm{b_1}_2 \norm{b_2}_2 \norm{w^{(1)}}_\infty \norm{w^{(\emp+1)}}_\infty  \prod_{j=2}^{\emp}\norm{w^{(j)}}_\infty^2 \cdot \prod_{r\in \L_2} \norm{w^{(r)}}_\infty^2 \prod_{r\in \L_1} \norm{w^{(r)}}^2_\infty \cdot  O_d(d^{\sum_{t\in K_2\cap \L_1}p_t}).
\end{equation}

Finally, recall that to get a bound on $\E[\mathbf{S}_\pi]$ we can multiply \eqref{eq:fin_boundcb} by $n^{q}$. The following claim gives a bound for $ q$.
\begin{claim}
The estimate $q \leq \emp + |\L_2|$ holds.
\end{claim}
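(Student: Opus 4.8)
The plan is to recast the desired bound as a counting statement about the type-1 collapses and then charge each such collapse to a block of $\pi_1$. Recall that there are $m$ sample indices $i_1,\ldots,i_m$ and that $\emp = m - |\L_1| - |\L_2|$, so $\emp + |\L_2| = m - |\L_1|$; the claim is therefore equivalent to $|\L_1| \le m - q$. Writing $\pi_1 = \{T_1,\ldots,T_q\}$ with $\sum_{j=1}^q |T_j| = m$, the right-hand side is exactly $\sum_{j=1}^q(|T_j| - 1)$. Hence it suffices to prove that the number of collapses of type \eqref{diag:collapse_1} is at most $\sum_{j=1}^q(|T_j|-1)$.

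The structural point I would use is that every type-1 collapse is attributable to a single block of $\pi_1$ and consumes exactly one of its elements. A type-1 collapse at current position $r$ is triggered by $\pi_1(r-1)=\pi_1(r)$, i.e., by two path-adjacent sample indices $i_{r-1}$ and $i_r$ that carry the same value; both therefore lie in the common block $T_{\pi_1(r)}$. The collapse deletes $i_r$ and merges it into $i_{r-1}$, removing one element of that block while leaving $i_{r-1}$ in place. Because the merged index retains the shared sample value, block membership is unaffected by the relabeling, so this charging is well defined across the whole iterative procedure.

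To count the collapses charged to a fixed block $T_j$, I would track the number $n_j$ of elements of $T_j$ still present in the path. Initially $n_j=|T_j|$, and $n_j$ is nonincreasing since every collapse of either type removes at most one sample index. A type-1 collapse charged to $T_j$ needs two present members of $T_j$, so it requires $n_j\ge 2$ immediately beforehand and drops $n_j$ by one. As $n_j$ descends from $|T_j|$ and type-1 collapses are admissible only while $n_j\ge 2$, at most the first $|T_j|-1$ decrements can be of type 1; thus the number $c_j$ of type-1 collapses charged to $T_j$ obeys $c_j\le |T_j|-1$. Summing yields $|\L_1| = \sum_{j=1}^q c_j \le \sum_{j=1}^q(|T_j|-1) = m-q$, as required.

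I expect the only delicate point to be the bookkeeping: one must confirm that the informal charging is compatible with the iterative relabeling of subscripts, and that type-2 collapses—which also delete sample indices and may lower some $n_j$—cannot manufacture extra type-1 collapses beyond the $|T_j|-1$ budget. This is precisely controlled by the monotonicity of $n_j$ together with the $n_j\ge 2$ precondition for a type-1 collapse, which cap the type-1 count no matter how the two kinds of collapses are interleaved.
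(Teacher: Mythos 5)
Your proof is correct, but it takes a genuinely different route from the paper's. You reformulate the claim as $|\L_1|\le m-q=\sum_{j=1}^q(|T_j|-1)$ and then run an amortized charging argument: each collapse of type \eqref{diag:collapse_1} is charged to the block of $\pi_1$ containing the two merged sample indices, and since such a collapse requires two live members of that block and consumes exactly one, the per-block budget of $|T_j|-1$ follows from monotonicity of the live count, no matter how type-\eqref{diag:collapse_2} deletions are interleaved. The paper instead counts blocks directly: it splits $[q]$ into $D_1=\{i: T_i\cap[\emp]\neq\emptyset\}$ and $D_2=\{i: T_i\cap[\emp]=\emptyset\}$, bounds $|D_1|\le\emp$ by disjointness of the nonempty intersections $T_i\cap[\emp]$, and bounds $|D_2|\le|\L_2|$ by the analogue of Claim~\ref{claim:g_u_l_1}: in a fully collapsed block, the last index removed by a type-\eqref{diag:collapse_1} collapse had a surviving partner in the same block, and since the block avoids $[\emp]$ that partner must lie in $\L_2$. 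Both arguments ultimately establish the same underlying fact---no block of $\pi_1$ is contained entirely in $\L_1$, so $q\le m-|\L_1|=\emp+|\L_2|$---but by different mechanisms: the paper exhibits an explicit witness element (in $[\emp]$ for $D_1$ blocks, in $\L_2$ for $D_2$ blocks), whereas you use the invariant that a type-\eqref{diag:collapse_1} collapse can never consume the last live element of a block, which makes the interleaving bookkeeping automatic and avoids the ``last removed index'' case analysis. Your version is somewhat more self-contained; the paper's has the advantage of reusing the Claim~\ref{claim:g_u_l_1} template verbatim, and your concluding caveat about compatibility with the iterative relabeling is correctly dispatched, since block membership is defined on the original indices via the bijection $\sigma$ and is untouched by the subscript shifts.
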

\begin{proof}
We split the set $[q]$ into two non-overlapping sets:
\begin{align*}
D_1&:=\{i~\vert~T_i \cap [\emp] \neq \emptyset\}\qquad \textrm{and}\qquad
D_2:=\{i~\vert~T_i \cap [\emp] = \emptyset\}.
\end{align*}
Recall that the cardinality of $T_i\cap [\emp]$ for any $i\in D_1$ is at least two and therefore the estimate $|D_1| \leq \emp$ holds. Now we consider the size of $D_2$. 
An argument completely analogous to Claim~\ref{claim:g_u_l_1} shows that for any $i\in D_2$, the intersection $T_i\cap \L_2$ is nonempty. 
Therefore, the estimate $|D_2| \leq |\L_2|$ holds. Combining our two upper bounds on $D_1$ and $D_2$ we conclude
$q = |D_1| + |D_2|\leq \emp +|\L_2|$, as claimed.
\end{proof}

Multiplying \eqref{eq:fin_boundcb} by $n^{q}\leq n^{\emp + |\L_2|}$ we  obtain the final bound:
\begin{align}\label{eq:s_pi_final}
    \E[\mathbf{S}_\pi] &\leq   \norm{b_1}_2 \norm{b_2}_2 
 \cdot O_d(d^{\sum_{t=2}^{\emp} p_{t}/2})\cdot n^{\emp}\norm{w^{(1)}}_\infty\norm{w^{(\emp+1)}}_\infty\prod_{j=2}^{\emp}\norm{w^{(j)}}_\infty^2 \notag\\
 &~~~\cdot \prod_{r\in \L_2} n\norm{w^{(r)}}_\infty^2\cdot  \prod_{r\in \L_1} \norm{w^{(r)}}^2_\infty \cdot  O_d(d^{\sum_{t\in \L_1 \cap K_2}p_t}).
\end{align}

\paragraph{Completing the proof.} %
We first verify the bound \eqref{eqn:labeled_hard_mat} for $t\in \{2,\ldots,\emp\}$. To this end, the estimate \eqref{eq:s_pi_final} directly implies 
\begin{align*}
     \E[\mathbf{S}_\pi] = \norm{b_1}_2 \norm{b_2}_2 \round{\prod_{t=2}^{\emp}O_d(d^{p_t/2})\cdot n^{1/2}\norm{w^{(t)}}_\infty^2} n^{1/2} \norm{w^{(1)}}_\infty\norm{w^{{(\emp+1)}}}_\infty \cdot O_d(1),
\end{align*}
where we use the fact that $\prod_{r\in \L_2} n\norm{w^{(r)}}_\infty^2$ and   $\prod_{r\in \L_1} \norm{w^{(r)}}^2_\infty \cdot  O_d(d^{\sum_{t\in \L_1 \cap K_2}p_t})$ are of the order $O_d(1)$ since by assumption $\norm{w^{(r)}}_{\infty} = O_d(n\half \wedge d^{-p_r/2})$ holds.

Recall that we have $n\norm{w^{(t)}}_\infty^2 = O_d(1)$. Therefore, by switching any single $O_d(d^{p_t/2})$ inside the parenthesis with $n^{1/2}$ outside the parenthesis, each product inside the parenthesis is $O_d(1)$ and therefore we
obtain
\begin{align}\label{eq:s_pi_bound_1}
     \E[\mathbf{S}_\pi] = \norm{b_1}_2 \norm{b_2}_2 \cdot O_d(d^{p_t/2})\cdot \norm{w^{(1)}}_\infty\norm{w^{\emp+1}}_\infty,
\end{align}
for any $t\in \{2,\ldots, \emp\}$.   Noting that $w^{{(1)}}$ and $w^{(\emp+1)}$ are in the blocks as the pre-labeled first and last indices respectively, which completes the proof of \eqref{eqn:labeled_hard_mat}.

Next, consider any index $r\in \mathcal{L}_1$. Then from \eqref{eq:s_pi_final} we obtain  the bound:
\begin{align*}
    \E[\mathbf{S}_\pi] &=  \norm{b_1}_2 \norm{b_2}_2 
 \cdot\round{\prod_{t=2}^{\emp}O_d(d^{p_t/2})\cdot n^{1/2}\norm{w^{(t)}}_\infty^2} n^{1/2} \norm{w^{(1)}}_\infty\norm{w^{(\emp+1)}}_\infty\\
 &~~~\cdot \prod_{r\in \L_2} n\norm{w^{(r)}}_\infty^2\cdot  \prod_{r\in \L_1} \norm{w^{(r)}}^2_\infty \cdot  O_d(d^{\sum_{t\in \L_1 \cap K_2}p_t})\\
 &=\norm{b_1}_2 \norm{b_2}_2 
 \cdot n^{1/2}\norm{w^{(1)}}_\infty\norm{w^{\emp+1}}_\infty  \prod_{r\in \L_1} O_d(\norm{w^{(r)}}^2_\infty d^{p_r}),
\end{align*}
where the first equality follows from rearranging the factors and the second equality uses the fact that $\round{\prod_{t=2}^{\emp}O_d(d^{p_t/2})\cdot n^{1/2}\norm{w^{(t)}}_\infty^2}$ and $  \prod_{r\in \L_2} n\norm{w^{(r)}}_\infty^2$ are of the order $O_d(1)$.
Since for any $r$ we have $\norm{w^{(r)}}_\infty n^{1/2} = O_d(1)$, it follows that for any $t\in \mathcal{L}_1$ we may replace the term $\norm{w^{(r)}}^2_\infty d^{p_t}$ in the product by $\norm{w^{(r)}}^2_\infty n=O_d(1)$ and multiply outside the product by $d^{p_t}/n$. The result product is $O_d(1)$ and therefore we conclude:
\begin{align}\label{eq:s_pi_bound_2}
     \E[\mathbf{S}_\pi] = \norm{b_1}_2 \norm{b_2}_2 O_d(d^{p_t})n\half\cdot \norm{w^{(1)}}_\infty\norm{w^{(\emp+1)}}_\infty,
\end{align}
for any $t\in \L_1$. Again noting that $w^{(1)}$ and $w^{\emp+1}$ are in the blocks as the pre-labeled first and last indices respectively, which completes the proof of \eqref{eqn:labeled_hard_mat}.

Finally, let $i,j\in [m+1]$ be the new indices that correspond to the original indices $1$ and $p+1$, respectively. As the final case to consider, fix any index $r\in \L_2\cup\{1,\emp+1\}$ satisfying $\pi_2(r)\neq \pi_2(i)$ and $\pi_2(r)\neq \pi_2(j)$.
Define the block index $u:=\pi_2(r)$. If $G_u$ intersects $\{2,\ldots,\emp\}$, then \eqref{eq:s_pi_bound_1} implies the same bound for $p_r$. Therefore we may suppose that this is not the case. Taking into account the equalities $\pi_2(1)=\pi_2(i)$ and $\pi_2(\emp+1)=\pi_2(j)$, we deduce that $G_u$ contains neither $1$ nor $\emp+1$. Thus the inclusion $G_u\subset \mathcal{L}_{1}\cup \mathcal{L}_2$ holds. Since no block of $\pi_2$ is contained in $\mathcal{L}_2$ we deduce that there exists some $r'\in \mathcal{L}_1$ satisfying $\pi_2(r)=\pi_2(r')$. Since we have already proved the result for all elements $\mathcal{L}_1$, the proof is complete in this case as well.

Finally, throughout the proof we assumed that $k_1$ has not collapsed with $k_{m+1}$. Had this in fact happened, then we have $\emp=1$ and all the intermediate features collapsed with the neighboring features. Then Eq.~\eqref{eq:s_pi_final} reduces to 
\begin{equation}\label{eqn:final_simplifiedest}
    \E[\mathbf{S}_\pi] =  \norm{b_1}_2\norm{b_2}_2
 \cdot \prod_{r\in \L_2} n\norm{w^{(r)}}_\infty^2\cdot  \prod_{r\in \L_1} O_d\round{\norm{w^{(r)}}^2_\infty \cdot d^{p_r}} .
\end{equation}
By exactly, the same argument as before, it suffices to prove the desired bound only for \eqref{eqn:labeled_hard_mat}  for  $t\in \L_1$. Let us find the new indices $i,j\in [m+1]$ that correspond to the original indices  $1$ and $m+1$. 
Clearly, we have $\pi_2(i)=\pi_2(j)$ since all the features collapsed. Consequently, either $i$ or $j$ lies in $\mathcal{L}_2$. Without loss of generality, therefore we may assume it is $i$. We now write 
$$\round{n\norm{w^{(i)}}^2_\infty}\round{\norm{w^{(t)}}^2_\infty d^{p_t}}=  \round{n\norm{w^{(t)}}^2_\infty}    \round{\norm{w^{(i)}}^2_\infty d^{p_t}}=O_d(1)\cdot \norm{w^{(i)}}^2_\infty d^{p_t}.$$
Appealing to \eqref{eqn:final_simplifiedest} we deduce
\begin{align*}
    \E[\mathbf{S}_\pi] &=  \norm{b_1}_2\norm{b_2}_2
 \cdot \prod_{r\in \L_2, r\neq i} n\norm{w^{(r)}}_\infty^2\cdot  \prod_{r\in \L_1, r\neq t} \norm{w^{(r)}}^2_\infty \cdot  O_d(d^{\sum_{t\in \L_1}p_t})\cdot \round{n \norm{w^{(t)}}_\infty^2}\norm{w^{(i)}}_{\infty}^2   \cdot O_d(d^{p_t})\\
 &= \norm{b_1}_2 \norm{b_2}_2\norm{w^{(i)}}^2_{\infty}   \cdot O_d(d^{p_t}),
\end{align*}
noting that the norm $\norm{w^{(i)}}_\infty = \norm{w^{(j)}}_\infty$  and $\norm{b_1} = \norm{b_2}=  1$ hold which completes the proof.

\section{Relaxing assumptions in Theorem~\ref{lemma:feature_product_tech}}
We complete the paper by recording another extension of the theorem---needed for an upcoming paper of the coauthors---where each $S_i\in \S_i$ is the union of distinct sets from multiple set families. More precisely, it may be the case that each set $S_i$ is a set family $\S_i$ may be written as 
$$S_i=\sqcup_{k=1}^{\ell} S_i^{[k]}$$
for some sets $S_i^{[k]}\subset \T_k$, where $\{\T_k\}_{k=1}^{\ell}$ form a partition of all the coordinates $[d]$ and  are potentially of sublinear size $\T_k=O_d(d^{s_k})$ with $s_k\in [0,1]$. In this case, one would expect that the ``effective degree'' of $\S_i$ is $\sum_{k} s_ip_i^{[k]}$ rather than the larger quantity $\sum_{k} p_i^{[k]}$. This is the content of the following theorem.
\begin{theorem}\label{lemma:feature_product_tech_new}
Suppose there exist non-overlapping sets $\T_1,\ldots, \T_\ell \subset [d]$ that satisfy $|\T_k| = \Theta(d^{s_k})$ for all $k\in[\ell]$ where $s_i\in[0,1]$. Fix the set families $ \S_1^{[k]}, \ldots  \S_{m+1}^{[k]}\in 2^{\T_k}$ for $k\in[\ell]$.
Consider collections of sets $\S_1,\ldots,\S_{m+1} \in 2^{[d]}$ and weights 
$w^{(i)}\in \R^{\S_i}_{+}$, where $S_i = \sqcup_{k=1}^\ell S_{i}^{[k]}$ for all $S_i\in \S_i$.  Define the matrix product
$$M= A_1^\top \Bigl(A_2 A_2^\top \Bigr) \cdots \Bigl(A_{m} A_{m}^\top \Bigr) A_{m+1},$$
where $A_i=X_{\mathcal{S}_i} \Diag(w^{(i)})$ are the scaled Fourier-Walsh matrices.
Assume  that the following regularity conditions hold for all indices $i,j\in [m+1]$:
\begin{enumerate}
    \item\label{it:deg_bound_new} {\bf (degree bound)} The inequalities $| S^{[k]}_i|\leq  p_i^{[k]}$ hold for all $ S_i^{[k]}\in  \S_i^{[k]}$,
    \item\label{it:triv_intersec_new} {\bf (trivial intersection)} Whenever $\S_i$ intersects $\S_j$, the equality $\S_i=\S_j$ holds,
    \item\label{it:small_weights_new} {\bf (small weights)} The weights satisfy 
    $w^{(i)}= O_d(n^{-1/2}\wedge d^{-\round{\sum_{k=1}^\ell s_k \cdot p_i^{[k]}}/2})$ for all $i\in [m+1]$.
\end{enumerate}
    Then the estimate
    \begin{equation}\label{eqn:labeled_hard_mat_new}
    \left\|\E M \right\|_{\rm op}
    = O_d \left(d^{\sum_{k=1}^\ell s_k \cdot p_j^{[k]}}\right)\|w^{(1)}\|_\infty \|w^{(m+1)}\|_\infty,
    \end{equation}
    holds for any index $j \in [m+1]$ such that $\S_{j}$ is distinct from $\S_{1}$ and $\S_{{m+1}}$.
\end{theorem}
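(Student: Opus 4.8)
The plan is to re-run the entire argument of Sections~\ref{sec:prelim_results} and \ref{sec:comp_proof}, exploiting the single structural fact that the block decomposition $[d]=\sqcup_{k}\T_k$ makes every counting and expectation computation \emph{factorize across blocks}, with the ambient dimension $d$ inside block $k$ effectively replaced by $|\T_k|=\Theta(d^{s_k})$. Concretely, since each set splits as $S_i=\sqcup_{k=1}^\ell S_i^{[k]}$ with $S_i^{[k]}\subset\T_k$, every Fourier-Walsh monomial factors as $x^{S_i}=\prod_{k=1}^\ell x^{S_i^{[k]}}$, where $x^{S_i^{[k]}}$ depends only on the coordinates indexed by $\T_k$. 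Because coordinates in distinct blocks are independent under the uniform measure on $\mathbb{H}^d$, all expectations appearing in the proof factorize,
\begin{align*}
\E\brac{x^{S_1}\cdots x^{S_q}} = \prod_{k=1}^\ell \E\brac{x^{S_1^{[k]}}\cdots x^{S_q^{[k]}}},
\end{align*}
and this identity drives everything else.

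First I would establish block-refined versions of the counting chain Lemma~\ref{lem:bin_matri} $\to$ Corollary~\ref{cor:mod2_cube} $\to$ Proposition~\ref{prop:basic_cpount_fourier}. The binary matrix encoding a tuple $(x^{S_1},\ldots,x^{S_q})$ now has its rows partitioned according to the blocks $\T_k$, and the parity and column-sum constraints decouple block by block. Applying Corollary~\ref{cor:mod2_cube} separately within each block $\T_k$---with $d$ replaced by $|\T_k|=\Theta(d^{s_k})$ and the degree data restricted to that block---and multiplying the resulting counts shows that the number of tuples representing a fixed $x^{S^*}$ is $O_d\round{d^{\sum_k s_k(\|p^{[k]}\|_1-\tilde p^{[k]})/2}}$. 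In other words, the raw exponent $\|p\|_1=\sum_k\|p^{[k]}\|_1$ from Proposition~\ref{prop:basic_cpount_fourier} is systematically replaced by the \emph{effective degree} $\sum_k s_k\|p^{[k]}\|_1$.

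Next I would propagate this substitution through the four weighted estimates of Proposition~\ref{prop:fourier_basis_product_card_2}. Those proofs are driven entirely by the cardinality bound \eqref{eq:card_bound_2} (now its block-refined analogue), by Cauchy--Schwarz and $\ell_1$--$\ell_2$ equivalence, and by the operator-norm bound $\snorm{A}\le\sqrt{\norm{A}_1\norm{A}_\infty}$ on bidjacency matrices; the latter two ingredients are dimension-agnostic. The only genuine work is recomputing the vertex-degree bounds: one refines the intersection parameters per block, writing $k=\sum_{k'}k^{[k']}$ and the symmetric-difference counts $j_1,j_2$ as block-wise sums, so that the number of $S_q$ with a prescribed block-wise intersection profile factorizes to $\prod_{k'}O_d\round{(d^{s_{k'}})^{p_q^{[k']}-k^{[k']}}}$. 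With these refined degrees the geometric-series collapses in \eqref{eq:bound_a_b_2} and \eqref{eq:bound_a_b_5} go through verbatim, since each $p_i^{[k]}$ and hence the number of intersection profiles stays $O_d(1)$; the net effect is that every exponent $\sum_t p_t$ in \eqref{eq:card_bound_3_group}--\eqref{eq:card_bound_6} becomes $\sum_t\sum_k s_k p_t^{[k]}$.

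Finally, the combinatorial core of Section~\ref{sec:comp_proof}---the path expansion \eqref{eqn:key_path_expr}, the partition stratification, the feature and sample collapse, the relabeling, and the iterative construction of $V_s$ in \eqref{eq:def_v_s}--\eqref{eqn:recurs}---never touches the ambient dimension except through its invocations of Proposition~\ref{prop:fourier_basis_product_card_2}. Substituting the block-refined form at each such invocation, and using the updated small-weights hypothesis \eqref{it:small_weights_new} (which is precisely what keeps the factors $n\norm{w^{(r)}}_\infty^2$ and $d^{\sum_k s_k p_r^{[k]}}\norm{w^{(r)}}_\infty^2$ of order $O_d(1)$), reproduces every bound of the original proof with $d^{p_j}$ replaced by $d^{\sum_k s_k p_j^{[k]}}$, giving \eqref{eqn:labeled_hard_mat_new}. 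I expect the main obstacle to be the bookkeeping of the per-block refinement of the bipartite-graph degree bounds feeding \eqref{eq:card_bound_4} and \eqref{eq:card_bound_6}: one must verify that summing over the finer collection of intersection profiles $\{k^{[k']},j_1^{[k']},j_2^{[k']}\}$ still yields only $O_d(1)$ many terms and that the dominant exponent is governed by the effective degree rather than the raw degree. Once this is checked, the remainder is the mechanical replacement of the raw degree by the effective degree throughout.
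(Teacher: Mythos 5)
Your proposal is correct and follows essentially the same route as the paper: the paper's proof likewise rests on the block factorization $\E[x^{S_1}\cdots x^{S_q}]=\prod_{k=1}^{\ell}\E[x^{S_1^{[k]}}\cdots x^{S_q^{[k]}}]$ via independence of coordinates across the $\T_k$, applies Proposition~\ref{prop:fourier_basis_product_card_2} separately within each block with the ambient dimension $d$ replaced by $|\T_k|=\Theta(d^{s_k})$, and then observes that the collapsing and recursion machinery of Section~\ref{sec:comp_proof} goes through verbatim with the raw degree replaced by the effective degree $\sum_k s_k p_i^{[k]}$. Your more systematic rederivation of the counting chain and the per-block intersection-profile bookkeeping for the bipartite-graph degree bounds is simply a fuller write-up of the same argument, which the paper localizes to the single invocation of Proposition~\ref{prop:fourier_basis_product_card_2} in bounding $V_s$.
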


The proof is nearly identical. One only needs to note that when bounding $V_s$ (Eq.~\eqref{eqn:key_bound_neededhere}), we decompose $x^{S_i}$ as the product $\prod_{k=1}^\ell x^{S_i^{[k]}}$ and apply Proposition~\ref{prop:fourier_basis_product_card_2} separately to each component $S_i^{[k]}$. This decomposition is valid because $S_i^{[k]}$ for $k\in[\ell]$ are disjoint sets, which allows us to factorize the expectation. Consequently, we may replace the factor $d^{p_i/2}$ by $d^{\round{\sum_{k=1}^\ell s_k \cdot p_i^{[k]}}/2}$ in all the bounds in  Proposition~\ref{prop:fourier_basis_product_card_2}. For example, 
Eq.~\eqref{eq:card_bound_3_group} becomes 
\begin{align}
     &~~\sum_{(S_{1},\ldots,S_{q}) \in \S_\pi}b_{S_q}\E\brac{x^{S_1}x^{S_2}\ldots x^{S_q}x^{S^*}} = \sum_{(S_{1},\ldots,S_{q}) \in \S_\pi} \prod_{k=1}^\ell b_{ S_q^{[k]}} \E\brac{x^{ S_1^{[k]}}x^{ S_2^{[k]}}\ldots x^{ S_q^{[k]}}x^{ S_*^{[k]}}}\notag \\
    &= \norm{b}_2^\ell\cdot \prod_{k=1}^{\ell}O_{d}((d^{s_k})^{(\sum_{t=1}^{q-1}  p_t^{[k]} )/2}),
\end{align}
where we decompose $S^*$ into $\sqcup S_*^{[k]}$ corresponding to $\T_k$.

Correspondingly, the bound obtained by iterating the bound on $|V_s|$ i.e., Eq.~\eqref{eq:e_s_c_final} becomes
\begin{align*}
\E[\mathbf{S}_c]=   O_d(d^{\sum_{t=2}^{\emp} \sum_{k=1}^\ell (s_k \cdot p_t^{[k]})/2})\cdot  \norm{b_1}_2^\ell \norm{b_2}_2^\ell \norm{w^{(1)}}_\infty \norm{w^{(\emp+1)}}_\infty  \prod_{j=2}^{\emp}\norm{w^{(j)}}_\infty^2 \cdot 
\prod_{\substack{r \in K_1 \cap \I_-} } \norm{w^{(r)}}_\infty^2.
\end{align*}
With the weights accommodated with the new features, the remainder of the proof is the same, and the factor $ d^{\sum_{k=1}^\ell s_k \cdot p_j^{[k]}}$ appears in the final bound.

\printbibliography

\appendix
\section{Proof of Equality~\eqref{eqn:intro_eqn_moment}}\label{proof:intro_eqn_moment}
Fix a vector $b\in \R^{\S}$. Then elementary algebraic manipulations show the equality:
\begin{align*}
    \frac{1}{n^2}b\tran \E\brac{X_\S\tran X_{\S'} X_{\S'}\tran X_\S}b=\frac{1}{n^2} \sum_{i,j\in[n]}\sum_{S_1\in \S, S_2\in \S, S'\in\S'} b_{S_1} \E\brac{x^{(i)}_{S_1} x^{(i)}_{S'} x^{(j)}_{S'}x^{(j)}_{S_2}} b_{S_2}.
\end{align*}
Since $\S\cap \S' = \emptyset$, all summands corresponding to $i\neq j$ are zero. Thus, the right-hand-side becomes
\begin{align*}
        \frac{1}{n^2} \sum_{i\in[n]}\sum_{S_1\in \S, S_2\in \S, S'\in\S'} b_{S_1} \E\brac{x^{(i)}_{S_1} \underbrace{x^{(i)}_{S'} x^{(i)}_{S'}}_{=1}x^{(i)}_{S_2}} b_{S_2} &=   \frac{|\S'|}{n^2} \sum_{i\in[n]}\sum_{S_1\in \S, S_2\in \S} b_{S_1} \E\brac{x^{(i)}_{S_1} x^{(i)}_{S_2}} b_{S_2}= \frac{|\S'|}{n}\norm{b}_2^2,
\end{align*}
where the second equality follows from the identity $\E\brac{x^{(i)}_{S_1} x^{(i)}_{S_2}} = \mathbbm{1}{\left\{S_1=S_2\right\}}$. Since $b$ is arbitrary, the claimed estimate \eqref{eqn:intro_eqn_moment} follows.

\end{document}